\documentclass[11pt,a4paper,reqno]{amsart}

\usepackage[margin=1.0in]{geometry}
\usepackage[T1]{fontenc}
\usepackage[utf8]{inputenc}
\usepackage{lmodern}
\usepackage{amsmath,amsthm,amssymb,amsxtra,mathtools,bm,mathrsfs}
\usepackage{enumerate,enumitem}
\usepackage{booktabs,array,longtable}
\usepackage{graphicx}
\usepackage{xcolor}
\usepackage{cite}
\usepackage[colorlinks=true,citecolor=blue,linkcolor=black,urlcolor=blue]{hyperref}
\usepackage[title]{appendix}
\allowdisplaybreaks
\numberwithin{equation}{section}

\newtheorem{theorem}{Theorem}[section]

\newtheorem{lemma}[theorem]{Lemma}

\newtheorem{remark}[theorem]{Remark}
\theoremstyle{definition}

\newcommand{\Sph}{\mathbb S^2}
\newcommand{\R}{\mathbb R}
\newcommand{\C}{\mathbb C}
\newcommand{\V}{\mathbb V}

\newcommand{\Q}{\mathcal Q}
\newcommand{\Lhyp}{\mathcal L}

\newcommand{\dd}{\,\mathrm d}
\newcommand{\gradS}{\nabla_{\Sph}}

\newcommand{\LapS}{\Delta_{\Sph}}

\newcommand{\norm}[1]{\left\lVert #1\right\rVert}
\newcommand{\abs}[1]{\left\lvert #1\right\rvert}

\newcommand{\tr}{\operatorname{tr}}

\newcommand{\spanop}{\operatorname{span}}

\def\d{{\mathrm d}}

\def\ud{\underline{D}}

\begin{document}

\title[]{\parbox[b]{\linewidth}{\centering A spherical harmonic pseudo-spectral approach to mean curvature flow of surfaces with spherical topology}}

\author[]{Genming Bai}

\subjclass[2020]{65M70, 65N35, 65D32, 53E10, 53A04, 35K93}
\keywords{spherical harmonics, pseudo-spectral method, hyperinterpolation, quadrature, mean curvature flow, convergence, stability}

\begin{abstract}
	We propose and analyze a spherical harmonic pseudo-spectral method for the mean curvature flow of closed surfaces with spherical topology. The evolving surface is represented by a global parametrization over the unit sphere, and the continuous weak formulation underlying Dziuk’s method [G. Dziuk, Numer. Math., 1991] is discretized in a finite-dimensional space of spherical harmonics. By explicitly taking into account quadrature errors on the autonomously evolving numerical surface, we prove exponential convergence of the position error under the assumption that the initial global parametrization is analytic. The convergence analysis developed herein is general and could apply to other moving-domain and geometric evolution problems. Numerical experiments confirm the theoretical result.
\end{abstract}

\maketitle
\vspace{-1em}

\section{Introduction}

Let \(\Gamma(t)\subset \mathbb R^3\) be a smooth closed evolving surface.  The
mean curvature flow of \(\Gamma(t)\) can be written as
\begin{align}\label{eq:MCF}
	(\partial_t X) \circ X^{-1} = \Delta_{\Gamma(t)}{\rm id} = -Hn,
\end{align}
where \(X(t)\) is a global parametrization of $\Gamma(t)$, \(n(t)\) is
the outward unit normal, and \(H(t)\) is the scalar mean curvature on $\Gamma(t)$ with the sign
convention fixed by \eqref{eq:MCF}.  The weak formulation is
\begin{align}\label{eq:dziuk-weak}
	\int_{\Gamma(t)} (\partial_t X)\circ X^{-1} \cdot \varphi
	+
	\int_{\Gamma(t)} \nabla_{\Gamma(t)}{\rm id}\cdot\nabla_{\Gamma(t)}\varphi
	=0 .
\end{align}
Mean curvature flow \eqref{eq:MCF} and its analytic properties have been studied extensively in
\cite{CY2007,EH1989,EH1991,Eck12,Man11,Huisken1990,Ecker2001,White2005,Huisken1984}.
Because \eqref{eq:MCF} prescribes only the normal motion of the surface, it retains the degrees of freedom to choose a tangential velocity, or a tangential gauge.  The corresponding numerical analysis depends
strongly on the choice of tangential gauge.  For the numerical analyses in the graph gauge, we refer to
\cite{DD2006,LN2005,BMN2004,DD1995,DD2000}; for DeTurck-type gauges, see
\cite{EF2017,Fritz_thesis}. We note that the well-known Barrett--Garcke--N\"urnberg (BGN) method \cite{BGN2007JCP,BGN2008JCP} can be identified as the endpoint case of the DeTurck-gauged family of methods proposed by Elliott and Fritz in \cite{EF2017}; recent convergence analyses of BGN-type methods can be found in \cite{BL2025,BGV2026}.
The normal gauge, where the velocity is purely normal, is more difficult to analyze.  In contrast to graph and DeTurck gauges, whose evolution systems are quasilinear
and strictly parabolic, the normal-gauge formulation is degenerate and lacks
full parabolicity; see, for instance, \cite[Eq. (2.1)]{CY2007}.  A recent
projection-error framework \cite{BL2024} recovers the discrete full \(H^1\) parabolicity, leading to a convergence proof; also see \cite{BL22A,Li21}. An alternative approach to the normal gauge is to discretize Huisken's weak formulation
\cite{Huisken1984}, originally introduced by Huisken in the study of blow-up singularities
of convex surfaces; related convergence analyses can be found in
\cite{KLL19,KLL-Willmore}.

Spectral and pseudo-spectral methods on flat domains are well-developed; see the textbooks
\cite{CHQZ1988book,STW2011book,Trefethen2000book} for a thorough survey. On stationary surfaces, approximation theory has been studied extensively in
\cite{DX2013book,AH2012book,Xu2014}.  On the algorithmic side, Sloan introduced the concept of
hyperinterpolation in \cite{Sloan1995,SW2000} -- a computable discrete analogue of
spectral projection.
This idea has subsequently been used in the convergence proofs of pseudo-spectral methods for
second-kind integral equations \cite{GS2002} and for the Navier--Stokes
equations on the unit sphere \cite{GLS2011}.  

Note that most available rigorous analyses for mean curvature flow use parametric finite
element methods; see \cite{DDE2005,DE2013,BGN2020} for an overview. These methods
typically yield algebraic convergence rates.  By contrast, pseudo-spectral methods are known to be spectrally accurate, computationally efficient,
and scalable for geometric flows
\cite{VRBZ11,VGBZ09,VGZB09,ST2018,ZB2025,FBS2023}, but rigorous error analysis for pseudo-spectral methods on moving domains remains open.  This gap motivates the present work.

The purpose of this paper is to develop a systematic numerical analysis for a spherical harmonic pseudo-spectral approximation of mean curvature flow in the
normal gauge, i.e., \eqref{eq:MCF}. The techniques developed in this paper are not restricted to mean curvature flow and could be extended to more general moving-domain problems. Instead of using a triangulated surface and local finite element spaces, we assume that the exact surface admits a global parametrization
\(X(t):\Sph\rightarrow\Gamma(t)\).  The numerical surface is represented by a
vector-valued spherical harmonic polynomial.  We consider the following Dziuk-type semi-discrete Galerkin method: Given a suitable
initial approximation \(X_N(0)\in[\V_N]^3\), we seek
\(X_N(t)\in[\V_N]^3\), with \(\Gamma_N(t)=X_N(t)(\Sph)\), such that
\begin{equation*}
	\mathscr M_{\Gamma_N,\Q_R}\big(\partial_tX_N,\phi_N\big)
	+\mathscr A_{\Gamma_N,\Q_R}\big(X_N,\phi_N\big)
	=0,\quad\forall\phi_N\in[\V_N]^3 .
\end{equation*}

Here \(\Q_R\) is a positive quadrature rule with exactness order \(R\),
\(\mathscr M_{\Gamma_N,\Q_R}\) and \(\mathscr A_{\Gamma_N,\Q_R}\) are the
discrete mass and stiffness forms, and
\[
	\V_N=\operatorname{span}\{Y_{\ell m}:0\le \ell\le N,\ -\ell\le m\le \ell\}
\]
is the space of spherical harmonics on $\Sph$.  Under suitable regularity assumptions, the
main theorem (Theorem~\ref{thm:main}) shows exponential convergence of
\(X_N(t)\) to \(X(t)\), i.e.,
\begin{align}
	\norm{X_N-X}_{L_t^\infty L_x^2([0,T]\times\Sph)}
	+
	\norm{\nabla_{\Sph} (X_N-X) \cdot n\circ X}_{L_t^2L_x^2([0,T]\times\Sph)}
	\leq C e^{-c N} . \notag
\end{align}
Since the weak formulation \eqref{eq:dziuk-weak} involves nonlinear geometric
quantities, the computation cannot be carried out entirely in the frequency
domain. This motivates the use of a pseudo-spectral method, in which the
nonlinear terms are evaluated on the physical domain with suitable quadrature rules.
In parametric finite element methods, quadrature error can be made higher order in the \(h\)-convergence, whereas in pseudo-spectral methods quadrature/aliasing error must be estimated explicitly in terms of the spectral degree \(N\).
 Unlike the parametric finite element approach, the
analysis here is carried out primarily on the fixed reference sphere \(\Sph\),
where integration by parts can be performed freely. 
The exponential convergence of the spectral approximation is also important for
stability: It allows the powers of \(N\) produced by inverse inequalities to be
absorbed into the exponential rate, which prevents possible order loss.

Because the normal-gauge formulation lacks tangential parabolicity, the error
terms must be handled carefully. In particular, they have to be controlled by
the full \(L^2\) norm together with only the normal component of the \(H^1\)
seminorm. A key step in the proof is that, after integration by parts, the
critical error terms in \eqref{eq:pdf-J2-geometric-difference} and
\eqref{eq:B-est1} manifest a directional structure. This structure allows them to
be controlled by the normal parabolicity in
Section~\ref{sec:norm-parab}.

Another key ingredient is to prove that the quadrature does not destroy numerical stability on the moving surface; see, in particular, the estimates \eqref{eq:pdf-J22} and \eqref{eq:pdf-K2} for the critical terms. Heuristically, this is because the geometric distortion of the surface produces a factor involving $\gradS e_N$ at each point $x\in\Sph$, where $e_N\in\V_N$ denotes the formal error. Fortunately, $\gradS e_N\in\V_{N+1}$ (see Section~\ref{sec:sph-arithm}), so, provided that the quadrature order is chosen sufficiently high, these geometric distortion terms are precisely captured and resolved by the quadrature rule, leading to the numerical stability.

The remainder of this paper is organized as follows. Section~2 reviews spherical harmonics, their product structure, and the de-aliasing degrees required for the mass and stiffness forms. Section~3 introduces the positive quadrature rule, hyperinterpolation, and spectral approximation estimates. Section~4 formulates the pulled-back pseudo-spectral Dziuk-type Galerkin method and states the main convergence theorem. Sections~5 and 6 are devoted to establishing the consistency and stability estimates, respectively, including the normal-parabolicity decomposition. In Section~7, we combine these consistency and stability bounds with a continuation argument to prove exponential convergence. Finally, Section~8 presents numerical experiments to confirm the predicted spectral convergence.

\section{Spherical harmonics and spectral spaces}

\subsection{Definition of spherical harmonics}

On the unit sphere $\Sph=\{x\in\R^3:|x|=1\}$, we define the spherical harmonics
\begin{equation*}
  Y_{\ell m}(\theta,\varphi)
  =c_{\ell m}P_\ell^{|m|}(\cos\theta)e^{im\varphi},
  \qquad \theta\in[0,\pi],\quad \varphi\in[0,2\pi),
\end{equation*}
where $P_\ell^{|m|}$ is the associated Legendre function and the normalization constant $c_{\ell m}$ is
chosen so that
\begin{equation*}
  \int_{\Sph}Y_{\ell m}\overline{Y_{\ell' m'}}
  =\delta_{\ell\ell'}\delta_{mm'}.
\end{equation*}
The spherical harmonics are eigenfunctions of the Laplace--Beltrami operator:
\begin{equation*}
  -\LapS Y_{\ell m}=\lambda_\ell Y_{\ell m},
  \qquad \lambda_\ell=\ell(\ell+1),
  \qquad -\ell\le m\le\ell.
\end{equation*}
We denote the space of spherical harmonics by
\begin{equation*}
  \V_N=\bigoplus_{\ell=0}^{N}\mathscr H_\ell
      =\spanop\{Y_{\ell m}:0\le\ell\le N,-\ell\le m\le\ell\} ,
\end{equation*}
with $\dim\V_N=(N+1)^2$.
For any $v\in L^2(\Sph)$, it admits the following spectral decomposition
\begin{equation*}
  v=\sum_{\ell=0}^{\infty}\sum_{m=-\ell}^{\ell}\widehat v_{\ell m}Y_{\ell m},
  \qquad \widehat v_{\ell m}=\int_{\Sph}v\overline{Y_{\ell m}}.
\end{equation*}
The $L^2$ projection onto $\V_N$ then coincides with the spectral truncation
\begin{equation*}
	\Pi_Nv=\sum_{\ell=0}^{N}\sum_{m=-\ell}^{\ell}
	\widehat v_{\ell m}Y_{\ell m}.
\end{equation*}

\subsection{Spherical-harmonic arithmetic}\label{sec:sph-arithm}

Products of spherical harmonics are expanded through Gaunt coefficients:
\begin{equation*}
	Y_{\ell_1m_1}Y_{\ell_2m_2}
	=\sum_{\ell=|\ell_1-\ell_2|}^{\ell_1+\ell_2}
	\sum_{m=-\ell}^{\ell}G(\ell_1,m_1;\ell_2,m_2;\ell,m)Y_{\ell m},
\end{equation*}
and, consequently,
\begin{equation*}
	p_{N_1}q_{N_2}\in\V_{N_1+N_2}
	\qquad p_{N_1}\in\V_{N_1},\ q_{N_2}\in\V_{N_2}.
\end{equation*}
For surface gradient, it holds that
\begin{align}
	\gradS U_N\in [\V_{N+1}]^3 \qquad U_N\in\V_N. \notag
\end{align}
Given $U_N,V_N\in\V_N$, for their diagonally contracted gradients, we have
\begin{equation*}
	\gradS U_N\cdot\gradS V_N
	=
	\frac{1}{2}
	\left[
	\Delta_{\Sph}(U_N V_N)
	- \Delta_{\Sph}U_N V_N
	- U_N \Delta_{\Sph} V_N
	\right]
	\in\V_{2N}
	,
\end{equation*}
and if $A\in\R^{3\times 3}$ is a constant matrix, not necessarily diagonal, then there is an additional spectral shift of 2,
\begin{equation*}
	A\gradS U_N\cdot\gradS V_N\in\V_{2N+2}.
\end{equation*}
Thus a quadrature rule must be exact to at least degree $2N+2$ to
mitigate the influence of the quadrature error of the stiffness term.

\subsection{Analytic functions on \texorpdfstring{$\Sph$}{S2}}

We regard $\Sph$ as a compact real analytic manifold with its standard analytic
atlas.  A function $f:\Sph\to\R$ is real analytic, denoted by
$f\in C^\omega(\Sph)$, if for every analytic coordinate chart
$\kappa:U\subset \Sph\to \kappa(U)\subset\R^2$, the local representative
$f\circ\kappa^{-1}$ is real analytic in the usual Euclidean sense; equivalently,
near every point it is represented by a convergent power series in the local
coordinates \cite[Definition 2.7.4]{KP2002book}.  For vector-valued functions on $\Sph$, analyticity is understood componentwise.

On the compact analytic manifold $\Sph$, analyticity can be characterized by the decay rate of spectral coefficients \cite[Theorem]{Seeley1969}:
$f\in C^\omega(\Sph)$ if and only if the spectral coefficients decay
exponentially
\begin{equation}\label{eq:analytic-coefficient-decay}
  \left(\sum_{m=-\ell}^{\ell}\abs{\widehat f_{\ell m}}^2\right)^{1/2}
  \leq C e^{-c \ell},
  \qquad \ell=0,1,2,\ldots ,
\end{equation}
for some constants $C$ and $c$.
It follows from \eqref{eq:analytic-coefficient-decay} that each spherical
harmonic, and hence every finite linear combination of spherical harmonics, is
real analytic.

For mean curvature flow \eqref{eq:MCF}, the global parametrization has an additional tangential-gauge freedom, and therefore the parametrization itself is not automatically smoothed by the geometric equation.
After fixing the gauge by a DeTurck-type reparametrization
\cite{DeTurck1983, CY2007}, the evolution system becomes strictly parabolic and quasilinear.
Classical parabolic regularity theory then yields spatial
analyticity of each time slice; see \cite[Remark 1.5.3]{Man11}. Pulling the
solution back by the corresponding DeTurck diffeomorphisms shows that, for
analytic initial data, \eqref{eq:MCF} preserves analytic
regularity on the time interval of existence.
\begin{theorem}[Persistence of analyticity]\label{thm:cont-reg}
  Let $X_0:\Sph\to\R^3$ be a real analytic global parametrization. Then there exists a classical solution  $X:[0,T]\times\Sph\to\R^3$ to \eqref{eq:MCF} with
  $X(0)=X_0$ such that
  \[
    X(t,\cdot)\in C^\omega(\Sph;\R^3),\qquad 0\le t\le T ,
  \]
  for some $T>0$.
\end{theorem}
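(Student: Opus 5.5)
The plan follows the outline given just before Theorem~\ref{thm:cont-reg}: fix the tangential gauge with the DeTurck trick, obtain analyticity for the resulting strictly parabolic system, and then pull back. Let $g_0=X_0^*\delta$ be the induced metric at time $0$; it is real analytic because $X_0$ is analytic and immersive. Take the background metric $\bar g$ on $\Sph$ to be the round metric. Instead of \eqref{eq:MCF} we consider the DeTurck-modified flow
\[
\partial_t \tilde X = \Delta_{\tilde g}\tilde X + d\tilde X\,(W),\qquad W^k=\tilde g^{ij}\big(\Gamma(\tilde g)^k_{ij}-\Gamma(\bar g)^k_{ij}\big),\qquad \tilde X(0)=X_0,
\]
where $\tilde g=\tilde X^*\delta$. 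In local coordinates the right-hand side equals $\tilde g^{ij}(\partial_{ij}\tilde X-\Gamma(\bar g)^k_{ij}\partial_k\tilde X)$. This is a quasilinear system whose principal part is $\tilde g^{ij}\partial_{ij}$ acting diagonally, so it is strictly parabolic as long as $\tilde X$ remains an immersion; see \cite{CY2007}. Standard quasilinear parabolic theory, for example Schauder/H\"older theory as in \cite{Man11}, then gives a unique classical solution on some interval $[0,T]$ that is smooth up to $t=0$ and remains immersive.

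The next step is spatial analyticity of $\tilde X(t)$ for each $t\in[0,T]$. The coefficients of the system are analytic functions of $(\tilde X,\nabla\tilde X)$ (rational in $\nabla\tilde X$, with analytic $\bar g$), and the initial datum is analytic. I would cite \cite[Remark 1.5.3]{Man11}. If a self-contained argument is needed, I would prove Cauchy-type estimates $\|\nabla^k\tilde X(t)\|_{L^\infty}\le C M^k k!$, uniformly for $t\in[0,T]$, by induction on $k$. The inductive step differentiates the equation along the analytic rotation vector fields of $\Sph$ and uses parabolic maximum-principle/Schauder estimates. The analytic initial datum provides the base bound at $t=0$. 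I expect this step to be the \textbf{main obstacle}: the combinatorics of the nonlinear terms must be controlled by Fa\`a di Bruno–type bounds so that the factorial growth closes. A cleaner alternative is Angenent's parameter trick. One composes with the flows of the rotation fields, i.e. one considers $\tilde X(t,e^{sV}x)$, and applies the implicit function theorem in the analytic category to the parabolic solution map. This shows that the solution depends analytically on $s$, which gives analyticity in $x$. Because the datum is analytic rather than merely smooth, analyticity holds up to $t=0$ and no time-weighted factor $t^k$ is needed.

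Finally we undo the gauge. Define $\phi_t:\Sph\to\Sph$ by $\partial_t\phi_t=-W(t,\phi_t)$ with $\phi_0=\mathrm{id}$, and set $X(t)=\tilde X(t)\circ\phi_t$. A direct computation shows that $\partial_t X=(\Delta_{\tilde g}\tilde X)\circ\phi_t$. This is the normal vector $-Hn$ of $\Gamma(t)=\tilde X(t)(\Sph)$ evaluated at $X$, so $X$ solves \eqref{eq:MCF} with $X(0)=X_0$. The map $X$ is a classical solution, and the $\phi_t$ are diffeomorphisms for $t\le T$ because they are flows of a smooth vector field. The vector field $W(t,\cdot)$ is analytic in $x$, uniformly in $t$, by the previous step. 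Analytic dependence of ODE flows on initial conditions (Cauchy--Kovalevskaya/majorant method for ODEs with coefficients that are analytic in space and continuous in time) then gives $\phi_t\in C^\omega$. Hence $X(t)=\tilde X(t)\circ\phi_t$ is real analytic as a composition of analytic maps, which proves Theorem~\ref{thm:cont-reg}.
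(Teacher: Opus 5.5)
Your proposal is correct and follows the same route as the paper. Both fix the tangential gauge by a DeTurck reparametrization to get a strictly parabolic quasilinear system, obtain spatial analyticity from parabolic regularity (the paper also just cites \cite[Remark 1.5.3]{Man11}), and pull back by the DeTurck diffeomorphisms $\phi_t$. You are more explicit than the paper's sketch on the point it leaves implicit: analyticity of $\tilde X(t,\cdot)$ must hold with a radius uniform down to $t=0$, which is where analyticity of $X_0$ is used, so that the flow $\phi_t$ of $-W$, and hence $X(t)=\tilde X(t)\circ\phi_t$, is analytic.
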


\section{Quadrature and hyperinterpolation on \texorpdfstring{$\Sph$}{S2}}

\subsection{Quadrature rule}

Let $\Q_R$ be a generic quadrature rule on $\Sph$, taking the form
\begin{align}
	\Q_R(f) = \sum_{j=1}^{M_{\Q_R}} w_j f(x_j), \notag
\end{align}
where $w_j\in\R$ and $x_j\in\Sph$, $j=1,\ldots,M_{\Q_R}$, are the quadrature
weights and nodes.
Throughout this paper, we assume that
\begin{itemize}
	\item[(A)] The weights of $\Q_R$ are positive.
	\item[(B)] $\Q_R$ is exact for $\V_R$, i.e., $\Q_R(f)=\int_{\Sph} f$ for all $f\in\V_R.$
\end{itemize}
We define the discrete sesquilinear inner product as
\begin{align}
	(u, v)_{\Q_R} := \Q_R(u \overline{v}) . \notag
\end{align}
Under the order condition $R\geq 2N$, for every $P_N\in\V_N$, we have
\begin{equation*}
  \Q_R(|P_N|^2)=\norm{P_N}_{L^2(\Sph)}^2,
\end{equation*}
and the discrete H\"older inequality:
\begin{equation*}
	|\Q_R(f P_N)|
	\leq \Q_R(|f|^2)^{1/2}\Q_R(|P_N|^2)^{1/2}
	\leq C \norm{f}_{L^\infty(\Sph)} \norm{P_N}_{L^2(\Sph)}.
\end{equation*}
The $L^\infty$-stability and exactness of the quadrature rule imply
\begin{equation*}
	\left|\left(\Q_R - \int_{\Sph}\right)f \right|
	\leq C \inf_{p_R\in\V_R}\norm{f-p_R}_{L^\infty(\Sph)}.
\end{equation*}

\subsection{Hyperinterpolation}

The degree-$N$ hyperinterpolation operator associated with $\Q_R$ is defined as
\begin{equation}\label{eq:hyperinterp}
  \Lhyp_N f
  =\sum_{\ell=0}^{N}\sum_{m=-\ell}^{\ell}
     (f, {Y_{\ell m}})_{\Q_R}Y_{\ell m}.
\end{equation}
If $R\geq 2N$, it holds that
(cf. \cite[Lemma 4 and Lemma 5]{Sloan1995}):
\begin{align}
	\Lhyp_N^2 = \Lhyp_N,\qquad
	\Lhyp_N|_{\V_N} = {\rm id}_{\V_N}, \notag
\end{align}
i.e., $\Lhyp_N$ is a projection onto $\V_N$.  Moreover, one can verify that
\begin{align}
	(\Lhyp_N f, g)_{\Q_R}
	= (f, \Lhyp_N g)_{\Q_R}
	= (\Lhyp_N f, \Lhyp_N g)_{\Q_R}
	= (\Lhyp_N f, \Lhyp_N g),
	\qquad
	(\Lhyp_N f, g)
	=
	(f, \Pi_N g)_{\Q_R}. \notag
\end{align}
Thus, $\Lhyp_N$ is self-adjoint with respect to the discrete inner product
$(\cdot,\cdot)_{\Q_R}$.  In particular, choosing $g=1$ gives the reciprocal
relation associated with \eqref{eq:hyperinterp}:
\begin{align}
	\Q_R(f) = \int_{\Sph} \Lhyp_N f, \notag
\end{align}
which is reminiscent of the Newton--Cotes quadrature rules.


\subsection{Approximation properties}

Let $W^{s,p}(\Sph),s\geq0,p\in[1,\infty]$, be the Sobolev–Slobodeckij space on the sphere, and denote $H^s(\Sph) := W^{s,2}(\Sph)$. Using the spectral decomposition, we have the norm equivalence
\begin{equation*}
	\norm{v}_{H^s(\Sph)}^2
	\sim_s\sum_{\ell=0}^{\infty}\sum_{m=-\ell}^{\ell}
	(1+\ell(\ell+1))^{s} |\widehat v_{\ell m}|^2.
\end{equation*}
The spectral truncation operator has a natural bound
\begin{equation*}
	\norm{v-\Pi_Nv}_{H^t(\Sph)}
	\leq C_{s,t} N^{t-s}\norm{v}_{H^s(\Sph)}.
\end{equation*}
Combining the Bernstein inequality \cite[Eq. (4.21)]{LM2006}
and the Nikolskii inequality \cite[Eq. (1.3)]{DGT2020}, we have the following inverse inequality on $\V_N$
\begin{equation*}
	\norm{p_N}_{W^{s,q}(\Sph)}
	\leq C_{s,t} N^{s-t+2(1/p-1/q)}\norm{p_N}_{W^{t,p}(\Sph)},
\end{equation*}
for all $p_N\in\V_N$, $s\ge t$, and $1\le p\le q\le\infty$; for an asymptotically sharp, but more complicated bound, see \cite{DFT2016}.


With the best approximation error defined by
\begin{align}
	E_N(f)_{s,p} := \inf_{g\in\V_N} \norm{f-g}_{W^{s,p}(\Sph)},\quad 1\leq p\leq \infty, \notag
\end{align}
the following upper bound is proved in \cite[Theorem 2.6.3, Proposition 2.6.4, and Corollary 4.5.6]{DX2013book}
\begin{align}\label{eq:BAE}
	E_N(f)_{s,p} \leq C_{s,t} N^{-t+s} \norm{f}_{W^{t,p}(\Sph)},
\end{align}
for all $0\leq s\leq t$
and $1\leq p\leq \infty$, and furthermore the upper bound of \eqref{eq:BAE} can be realized by some $g_N^*\in\V_N$.
With this result, we are able to show the stability and approximation properties of $\Pi_N$ and $\Lhyp_N$ in the following lemma.
\begin{lemma}
		We have
	\begin{align}
			\| (1 - \Pi_N) f \|_{W^{s,p}(\Sph)}
			\leq
			C_{s,t} N^{-t+s+|1/2-1/p|}
			\| f \|_{W^{t, p}(\Sph)} , \notag
	\end{align}
	and
	\begin{align}
			\| (1 - \Lhyp_N) f \|_{W^{s,p}(\Sph)}
			\leq
			C_{s,t} N^{-t+s+\max\{1/2-1/p,0\}}
			\| f \|_{W^{t,\infty}(\Sph)}. \notag
	\end{align}
	In particular, choosing $s=t$ and applying the triangle inequality gives the
	high-order stability estimates
	\begin{align}
				\| \Pi_N f \|_{W^{t,p}(\Sph)}
				\leq
				C_{t} N^{|1/2-1/p|} \| f \|_{W^{t, p}(\Sph)} , \notag
	\end{align}
	and
	\begin{align}
			\| \Lhyp_N f \|_{W^{t,p}(\Sph)}
			\leq
			C_{t} N^{\max\{1/2-1/p, 0\}}
			\| f \|_{W^{t,\infty}(\Sph)}. \notag
	\end{align}
\end{lemma}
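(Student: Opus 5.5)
The strategy is the standard ``Lebesgue constant plus best approximation'' argument. For any $g_N\in\V_N$ both $\Pi_N$ and $\Lhyp_N$ reproduce $g_N$ (the latter because $R\ge 2N$). Hence
\[
(1-\Pi_N)f=(1-\Pi_N)(f-g_N),\qquad (1-\Lhyp_N)f=(1-\Lhyp_N)(f-g_N).
\]
We take $g_N=g_N^*$, the near-best approximant realizing \eqref{eq:BAE}. The task then reduces to bounding the operator norms of $\Pi_N$ and $\Lhyp_N$ between the relevant spaces, which are then multiplied by $E_N(f)$. Since \eqref{eq:BAE} holds with any smoothness index $s\le t$, we apply it at level $s$: $\norm{f-g_N^*}_{W^{s,p}}\le CN^{s-t}\norm{f}_{W^{t,p}}$, and similarly with $p=\infty$ for the hyperinterpolation bound.

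\textbf{Step 1 ($\Pi_N$).} It suffices to show $\norm{\Pi_N v}_{W^{s,p}}\le CN^{|1/2-1/p|}\norm{v}_{W^{s,p}}$.

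For $p\ge2$, write $\norm{\Pi_N v}_{W^{s,p}}\le CN^{2(1/2-1/p)}\norm{\Pi_N v}_{H^s}$ by the Nikolskii inequality, then use $H^s$-stability of $\Pi_N$ and $\norm{v}_{H^s}\le C\norm{v}_{W^{s,p}}$. This yields the exponent $1-2/p$ rather than $1/2-1/p$, so the Nikolskii route is too crude.

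The first fix to try is the known Lebesgue-constant bound for Fourier--Laplace partial sums on $\Sph$: $\norm{\Pi_N}_{L^\infty\to L^\infty}\sim N^{1/2}$. By duality the same holds on $L^1$, and Riesz--Thorin interpolation with the $L^2$ case gives $N^{|1-2/p|\cdot 1/2}=N^{|1/2-1/p|}$. Since $\Pi_N$ commutes with $\LapS$ and rotations, the $W^{s,p}$ version follows by applying the bound to $(1-\LapS)^{s/2}v$ (using Bessel-potential norms equivalent to $W^{s,p}$ for $1<p<\infty$), or to rotational derivatives for integer $s$. I expect this step to be the main obstacle: making the $W^{s,p}$ versus Bessel-potential equivalence and the endpoint cases rigorous. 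If needed, I would restrict to integer $s$ and use the rotation-generator derivatives $L_{ij}=x_i\partial_j-x_j\partial_i$, which commute with $\Pi_N$.

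\textbf{Step 2 ($\Lhyp_N$).} Hyperinterpolation satisfies $\norm{\Lhyp_N v}_{L^2}\le C\norm{v}_{L^\infty}$. Indeed, by the identities above, $\norm{\Lhyp_N v}_{L^2}^2=(\Lhyp_Nv,\Lhyp_Nv)_{\Q_R}\le \Q_R(|v|^2)$ using positivity of the weights, and $\Q_R(|v|^2)\le 4\pi\norm{v}_\infty^2$ by exactness on constants. Together with $\norm{\Lhyp_N}_{L^\infty\to L^\infty}\le CN^{1/2}$ (Sloan--Womersley, or Nikolskii applied to the $L^2$ bound) and interpolation, this gives $\norm{\Lhyp_N v}_{L^p}\le CN^{\max\{1/2-1/p,0\}}\norm{v}_{L^\infty}$, since for $p\le 2$ the $L^2$ bound already suffices on the compact sphere.

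For derivatives, I would write
\[
(1-\Lhyp_N)f=(f-g_N^*)-\Lhyp_N(f-g_N^*),
\]
and choose $g_N^*$ with $\norm{f-g_N^*}_{W^{s,\infty}}\le CN^{s-t}\norm{f}_{W^{t,\infty}}$. For the second term, apply the inverse inequality $\norm{\Lhyp_N w}_{W^{s,p}}\le CN^s\norm{\Lhyp_N w}_{L^p}$ with $w=f-\Pi_{\lfloor N/2\rfloor}$-type approximants. An $N^s$ loss appears unless the derivative is paired with an approximation at level $s=0$. To avoid it, use a telescoping dyadic decomposition $f-g_N^*=\sum_k (g_{2^{k+1}N}^*-g_{2^kN}^*)$, whose pieces lie in $\V_{2^{k+1}N}$ and have $L^\infty$ size $(2^kN)^{-t}\norm{f}_{W^{t,\infty}}$. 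Then apply inverse inequalities to $\Lhyp_N$ of each piece, i.e.\ degree-$N$ output costs $N^s$, and sum the geometric series to obtain $N^{s-t+\max\{1/2-1/p,0\}}$. The first term, $(f-g_N^*)$ in $W^{s,p}$, is bounded directly by the $W^{s,\infty}$ best-approximation estimate.

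\textbf{Step 3.} The stability estimates follow by taking $s=t$ and the triangle inequality $\norm{\Pi_Nf}\le\norm{f}+\norm{(1-\Pi_N)f}$, as indicated in the statement.
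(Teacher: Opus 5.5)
Your proof is correct in substance, but Step 1 follows a different route from the paper. You prove $W^{s,p}$-stability of $\Pi_N$ with the bare $L^p$ Lebesgue constant $N^{|1/2-1/p|}$ by commuting $\Pi_N$ with the rotation generators (or with $(1-\LapS)^{s/2}$). After that, you only need the best-approximation bound \eqref{eq:BAE} at the single level $s$.

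The paper never differentiates through $\Pi_N$. It applies the Bernstein inequality to $\Pi_N(f-g_N^*)\in\V_N$, paying $N^s$, and uses only the $L^p$ bound for $\Pi_N$. It then recovers the $N^s$ from the level-$0$ estimate $\norm{f-g_N^*}_{L^p(\Sph)}\le CN^{-t}\norm{f}_{W^{t,p}(\Sph)}$, with the same $g_N^*$ also controlling $\norm{f-g_N^*}_{W^{s,p}(\Sph)}$.

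What each route buys:
\begin{itemize}
\item Your version avoids simultaneous approximation and yields a genuine $W^{s,p}\to W^{s,p}$ operator bound.
\item But it relies on rotation equivariance, which $\Lhyp_N$ lacks because the quadrature nodes break the symmetry. This is why you need a separate argument for $\Lhyp_N$.
\item It covers integer $s$ directly. Fractional $s$ then follows by real interpolation of the integer-order bounds, so you do not need to restrict the statement to integer $s$.
\item The paper's argument uses only that the operator reproduces $\V_N$, maps into $\V_N$, and is bounded into $L^p$. So it transfers verbatim to $\Lhyp_N$, and your Step 2 in fact ends up being exactly this argument.
\end{itemize}

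Two corrections to Step 2.

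First, the dyadic telescoping buys nothing. The feared $N^s$ loss only occurs if $g_N^*$ is good merely at level $s$. Yet your telescoping still needs every $g^*_{2^kN}$, including $g_N^*$ itself, to be an $L^\infty$ approximant at level $0$. Meanwhile $g_N^*$ also serves as the level-$s$ approximant in the first term. With this simultaneous $g_N^*$ you get in one line $\norm{\Lhyp_N(f-g_N^*)}_{W^{s,p}(\Sph)}\le CN^{s+\max\{1/2-1/p,0\}}\norm{f-g_N^*}_{L^\infty(\Sph)}\le CN^{s-t+\max\{1/2-1/p,0\}}\norm{f}_{W^{t,\infty}(\Sph)}$. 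The geometric series would in any case require $t>0$.

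Second, the alternative ``Nikolskii applied to the $L^2$ bound'' does not give $\norm{\Lhyp_N}_{L^\infty\to L^\infty}\le CN^{1/2}$. The $L^2\to L^\infty$ Nikolskii constant on $\V_N$ is of order $N$, since $\dim\V_N=(N+1)^2$. That reproduces exactly the $N^{1-2/p}$ loss you rejected in Step 1. The $N^{1/2}$ bound genuinely requires the Sloan--Womersley estimate that the paper cites.
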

\begin{proof}
		By definition, $\Pi_N$ is naturally stable in $L^2(\Sph)$.  Its mapping properties in
		$L^1(\Sph)$ and $L^\infty(\Sph)$ are given in
		\cite[Theorem 2.4.1]{DX2013book}. Then, complex interpolation gives
	\begin{align}
			\| \Pi_N f \|_{L^p(\Sph)}
			\leq
			C N^{|1/2-1/p|} \| f \|_{L^p(\Sph)}. \notag
	\end{align}
		The stability estimate for $\Lhyp_N$ follows from
		\cite[Theorem 1]{Sloan1995}, provided that the associated quadrature rule
		satisfies the exactness and positivity assumptions stated above:
	\begin{align}
				\| \Lhyp_N f \|_{L^2(\Sph)}
				\leq
				C \| f \|_{L^\infty(\Sph)} , \notag
	\end{align}
	and, according to \cite[Theorem 5.5.2]{SW2000},
	\begin{align}
			\| \Lhyp_N f \|_{L^\infty(\Sph)}
			\leq
			C N^{1/2} \| f \|_{L^\infty(\Sph)}. \notag
	\end{align}
	Consequently, complex interpolation gives
	\begin{align}
			\| \Lhyp_N f \|_{L^p(\Sph)}
			\leq
			C N^{1/2-1/p} \| f \|_{L^\infty(\Sph)}, \notag
	\end{align}
	for all $2\leq p\leq \infty$.  For $1\leq p\leq 2$, we can use the trivial inequality
	\begin{align}
			\| \Lhyp_N f \|_{L^p(\Sph)}
			\leq
			C \| \Lhyp_N f \|_{L^2(\Sph)}
			\leq
			C \|  f \|_{L^\infty(\Sph)}. \notag
	\end{align}
	In summary, we obtain
	\begin{align}
		\| \Lhyp_N f \|_{L^p(\Sph)}
		\leq
		C N^{\max\{ 1/2-1/p, 0 \}} \| f \|_{L^\infty(\Sph)}. \notag
	\end{align}
	Using the mapping properties obtained above and the inverse inequality, we derive
	\begin{align}
		&\| (1 - \Pi_N) f \|_{W^{s,p}(\Sph)}
		=
		\| (1 - \Pi_N) (f - g_N^*) \|_{W^{s,p}(\Sph)}
		\notag\\
		&\leq
		C_s N^s\| \Pi_N (f - g_N^*) \|_{L^{p}(\Sph)}
		+
		\| f - g_N^* \|_{W^{s,p}(\Sph)}
		\notag\\
		&\leq
		C_s N^{s+|1/2-1/p|}\| f - g_N^* \|_{L^{p}(\Sph)}
		+
		\| f - g_N^* \|_{W^{s,p}(\Sph)}
		\notag\\
		&\leq
		C_{s,t} N^{-t+s+|1/2-1/p|} \|  f \|_{W^{t,p}(\Sph)}
		+
		C_{s,t} N^{-t+s} \|  f \|_{W^{t,p}(\Sph)}
		\notag\\
		&\leq
		C_{s,t} N^{-t+s+|1/2-1/p|} \| f \|_{W^{t, p}(\Sph)} . \notag
	\end{align}
	A verbatim argument also carries over to $\| (1 - \Lhyp_N) f \|_{W^{s,p}(\Sph)}$, so its proof is omitted here.
\end{proof}

\begin{lemma}[Analytic approximation]\label{lemma:anal-approx}
		The analytic approximation estimates for $\Pi_N$ and $\Lhyp_N$ read
	\begin{align}
		\| (1 - \Pi_N) f \|_{W^{s, p}(\Sph)}
		+
		\| (1 - \Lhyp_N) f \|_{W^{s, p}(\Sph)}
		\leq C_s e^{-c_s N} , \notag
	\end{align}
		where the constants $C_s$ and $c_s$ depend on $s$ and $f$.
\end{lemma}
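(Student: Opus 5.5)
The plan is to reduce everything to the exponential decay of spectral coefficients \eqref{eq:analytic-coefficient-decay}. The statement implicitly assumes $f\in C^\omega(\Sph)$, and $C_s$ and $c_s$ may depend on $s$, on $p$ and on $f$. First I will bound the best approximation $E_N(f)_{s,p}$ directly by the tail of the truncation in a strong norm. For $f$ analytic, Seeley's characterization \cite{Seeley1969} gives $\big(\sum_m|\widehat f_{\ell m}|^2\big)^{1/2}\le Ce^{-c\ell}$. Hence, for any integer $k$, the norm equivalence yields
\begin{align}
	\norm{(1-\Pi_N)f}_{H^k(\Sph)}^2 \le C_k\sum_{\ell>N}(1+\ell)^{2k}e^{-2c\ell}\le C_k e^{-cN}. \notag
\end{align}
Here the polynomial factor is absorbed by halving the exponent, at the cost of a $k$-dependent constant.

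Next I pass from $H^k$ to $W^{s,p}$ by Sobolev embedding on the compact two-dimensional manifold. Choosing $k\ge s+2$ (say $k=\lceil s\rceil+2$), we get $H^k(\Sph)\hookrightarrow W^{s,\infty}(\Sph)\hookrightarrow W^{s,p}(\Sph)$ for all $p\in[1,\infty]$. This gives $\norm{(1-\Pi_N)f}_{W^{s,p}}\le C_se^{-c_sN}$ directly, so the first term is done without needing the $L^p$ stability of $\Pi_N$. An alternative route is to sum the tail $\sum_{\ell>N}\norm{\Pi^{(\ell)} f}_{W^{s,\infty}}$ over individual harmonic components. This uses the inverse inequality $\norm{p_\ell}_{W^{s,\infty}}\le C\ell^{s+1}\norm{p_\ell}_{L^2}$ together with the coefficient decay; it gives the same result and is more transparent about constants.

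For $\Lhyp_N$, I will use that it is a projection onto $\V_N$ when $R\ge 2N$, so $(1-\Lhyp_N)f=(1-\Lhyp_N)(f-\Pi_Nf)$. Writing $g=f-\Pi_Nf$, I split
\begin{align}
	\norm{(1-\Lhyp_N)g}_{W^{s,p}}\le \norm{g}_{W^{s,p}}+\norm{\Lhyp_N g}_{W^{s,p}}. \notag
\end{align}
Since $\Lhyp_Ng\in\V_N$, the inverse inequality gives $\norm{\Lhyp_N g}_{W^{s,p}}\le CN^{s}\norm{\Lhyp_Ng}_{L^p}$. The $L^p$-mapping bound established in the previous lemma then gives $\norm{\Lhyp_N g}_{L^p}\le CN^{1/2}\norm{g}_{L^\infty}$. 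The second step of the argument gives $\norm{g}_{L^\infty}\le Ce^{-cN}$, so the total is $CN^{s+1/2}e^{-cN}\le C_se^{-c_sN}$ after slightly shrinking $c$.

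The main point requiring care is the bookkeeping of polynomial factors. These include $N^{s+1/2}$ from the inverse and stability estimates, and the $\ell^{k}$ weights in the Sobolev tail. All of them must be absorbed into the exponential by reducing the rate, e.g.\ $c_s=c/2$, with $C_s$ depending on $s$, $p$ and the analyticity constants of $f$. Non-integer $s$ is handled by monotonicity in $s$, replacing $s$ by $\lceil s\rceil$.
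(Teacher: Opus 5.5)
Your proof is correct and follows essentially the same route as the paper. For $\Pi_N$, both arguments bound the $H^{s+2}$ tail using the coefficient decay and then apply Sobolev embedding; for $\Lhyp_N$, both use the identity $(1-\Lhyp_N)f=(1-\Lhyp_N)(1-\Pi_N)f$. The only minor difference is how you bound $\Lhyp_N(1-\Pi_N)f$: you combine the inverse inequality with the $L^\infty\to L^p$ bound and lose $N^{s+1/2}$, whereas the paper uses the high-order $W^{s,\infty}\to W^{s,p}$ stability of $\Lhyp_N$ and loses only $N^{1/2}$. Either polynomial loss is absorbed into the exponential.
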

\begin{proof}
		Combining Sobolev embedding and the exponential decay of the spectral coefficients, i.e., \eqref{eq:analytic-coefficient-decay}, we have
	\begin{align}
		&\| (1 - \Pi_N) f \|_{W^{s,p}(\Sph)}
		\leq
		C \| (1 - \Pi_N) f \|_{H^{s+2}(\Sph)}
		\notag\\
		&\leq C_s
		\bigg(\sum_{\ell=N+1}^\infty \sum_{m=-\ell}^{\ell}
		(1+\ell^2)^{s+2}  |{\widehat f_{\ell m}}|^2\bigg)^{1/2}
		\leq
		C_s
		\bigg(\sum_{\ell=N+1}^\infty
		(1+\ell^2)^{s+2} e^{-c\ell}\bigg)^{1/2}
		\leq C_s e^{-c_s N} . \notag
	\end{align}
		Similarly, using the $W^{s,\infty}\rightarrow W^{s,p}$ stability of $\Lhyp_N$, we obtain
	\begin{align}
		&\| (1 - \Lhyp_N) f \|_{W^{s,p}(\Sph)}
		=
		\| (1 - \Lhyp_N) (1 - \Pi_N) f \|_{W^{s,p}(\Sph)}
		\notag\\
		&
		\leq
		\| (1 - \Pi_N) f \|_{W^{s, p}(\Sph)}
		+
		C_s N^{1/2} \| (1 - \Pi_N) f \|_{W^{s,\infty}(\Sph)}
		\notag\\
		&
		\leq (C_s + C_s N^{1/2}) e^{-c_s N}
		\leq C_s e^{-c_s N} , \notag
	\end{align}
	where, in the last inequality, we absorb $N^{1/2}$ into the exponential after reducing the generic constant $c_s$ suitably.
\end{proof}

\section{Geometry notation and the main theorem}

\subsection{Geometry notation}

We consider a spectral Galerkin method in a finite-dimensional space of spherical harmonics. The numerical surface at time $t$,
denoted by $\Gamma_N(t)$, is parametrized by
$X_N(t):\Sph\to\Gamma_N(t)$, whose components belong to $\V_N$.  The exact surface
is denoted by $\Gamma(t)$ and is parametrized by
$X(t):\Sph\to\Gamma(t)$.  We also define the following spectral comparison surface for consistency analysis
\[
  X_{N,*}(t)=\Lhyp_N X(t),\qquad
  \Gamma_{N,*}(t)=X_{N,*}(t)(\Sph).
\]
We denote by $\d X(t):T\Sph\to T\Gamma(t)$ the differential of
$X(t):\Sph\to\Gamma(t)$ in the Riemannian sense.  
Note that $\d X(t)$ and $(\d X(t))^{-1}$ admit the following natural extensions to the ambient
Euclidean space $\R^3$ at any $x\in\Sph$,
\begin{align}
	\overline{\d X(t)}|_x v
	:=
	\begin{cases}
		\d X(t)|_x v, & \mbox{if $v\in T\Sph_x$},\\
		0, & \mbox{if $v\in N\Sph_x$},
	\end{cases} \notag
\end{align}
\begin{align}
	\overline{(\d X(t))^{-1}}|_x v
	:=
	\begin{cases}
		(\d X(t)|_x)^{-1} v,
		& \mbox{if $v\in T\Gamma(t)_{X(t,x)}$},\\
		0, & \mbox{if $v\in N\Gamma(t)_{X(t,x)}$} .
	\end{cases} \notag
\end{align}
These extended mappings can be naturally interpreted as $\R^{3\times 3}$
matrices at a given point $x\in\Sph$.  With this convention, their matrix
representations coincide with
$\nabla_{\Sph} X(t, x)$ and
$(\nabla_{\Sph} X(t,x))^\dagger$, respectively, where $\dagger$ denotes the
Moore--Penrose pseudo-inverse (cf. \cite[Chapter 17]{Roman2005book}).

We first define the scalar Jacobian, or the volume distortion factor,
\[
  J(t, x):=\det(\d X(t)|_x) = {\rm pdet}(\nabla_{\Sph} X(t, x)) > 0,
\]
where ${\rm pdet}$ denotes the pseudo-determinant, i.e., the product of the
non-zero singular values, and for simplicity we assume that the orientation is positive.  We also define the matrix-valued coefficient on $\Sph$ by
\begin{align}
	A(t, x)
	&:=\overline{(\d X(t))^{-1}}|_x
	\big(\overline{(\d X(t))^{-1}}|_x\big)^T J(t, x)
	\notag\\
	&=(\nabla_{\Sph} X(t,x))^\dagger
	\big((\nabla_{\Sph} X(t,x))^\dagger\big)^T J(t, x). \notag
\end{align}
The definitions imply that $J$ and $A$ depend smoothly on $t$ and
$\nabla_{\Sph}X$ in a neighborhood of the exact solution. Therefore, we can write
\begin{align}\label{eq:JA-bar}
	J(t,x) = \bar J(t, \nabla_{\Sph} X(t, x)), \qquad
	A(t,x) = \bar A(t, \nabla_{\Sph} X(t, x)).
\end{align}
Similar definitions apply to the comparison surface
$X_{N,*}(t):\Sph\rightarrow\Gamma_{N,*}(t)$:
\begin{equation*}
	J_{N,*}={\rm pdet}(\nabla_{\Sph} X_{N,*}(t, x)) ,
	\qquad
	A_{N,*}
	=J_{N,*}\,(\nabla_{\Sph} X_{N,*}(t,x))^\dagger
	\big((\nabla_{\Sph} X_{N,*}(t,x))^\dagger\big)^T,
\end{equation*}
and to the numerical surface $X_N(t):\Sph\rightarrow\Gamma_N(t)$:
\begin{equation*}
	J_N={\rm pdet}(\nabla_{\Sph} X_{N}(t, x)),
	\qquad
	A_N
	=J_N\,(\nabla_{\Sph} X_N(t,x))^\dagger
	\big((\nabla_{\Sph} X_N(t,x))^\dagger\big)^T.
\end{equation*}
	These coefficients are the pullback quantities that transfer integrals on the
	moving surfaces to the fixed reference sphere.  More precisely, let
	$(F,J_F,A_F)$ be one of
	\[
		(X(t),J(t),A(t)),\qquad
		(X_{N,*}(t),J_{N,*}(t),A_{N,*}(t)),\qquad
		(X_N(t),J_N(t),A_N(t)),
	\]
	and let $\Sigma_F:=F(\Sph)$.  For functions $u,v$ defined on $\Sigma_F$, write
	$\widetilde u:=u\circ F$ and $\widetilde v:=v\circ F$ for their pullbacks to
	$\Sph$.  Then
	\begin{align*}
		\int_{\Sigma_F}uv
		&=\int_{\Sph}J_F\,\widetilde u\,\widetilde v,
		\\
		\int_{\Sigma_F}\nabla_{\Sigma_F}u\cdot\nabla_{\Sigma_F}v
		&=\int_{\Sph}\big(A_F\gradS\widetilde u\big)
		\cdot\gradS\widetilde v .
	\end{align*}

As in parametric finite element methods
\cite{DE2013,DDE2005,BL2024,KLL19}, the Galerkin functions are represented by
finite coefficient vectors.  Thus any function $f\in\V_N$ can be identified
uniquely with its coefficient vector, in analogy with the node-value vector in
finite element methods.
To avoid cumbersome pushforward and pullback notation, we adopt the convention
used in \cite{BL2024}.  For any discrete parametrization
$F_N:\Sph\rightarrow \Sigma_N\subset\R^3$, define the pushforward space of spherical harmonics
\begin{align}
	(F_N)_*(\V_N) := \{ f_N\in C(\Sigma_N): f_N\circ F_N\in\V_N \}. \notag
\end{align}
This pushforward map $(F_N)_*$ is one-to-one, so $f_N$ and $f_N\circ F_N$
share the same coefficient vector
$\mathbf f_N\in\C^{\dim(\V_N)}$.
With this identification in mind, we can write
\begin{align}
	\int_{\Sph} f_N,\qquad \int_{\Gamma_N} f_N,\qquad \int_{\Gamma_{N,*}} f_N \notag
\end{align}
without ambiguity: Each integrand is the realization of the same coefficient
vector $\mathbf f_N$ on a different surface.  In the rest of the paper, we use
this identification whenever the underlying surface is clear from context.

\subsection{The scheme and the convergence theorem}

To discretize \eqref{eq:dziuk-weak}, we consider the following
Dziuk-type pseudo-spectral method:
\begin{equation}\label{eq:pdf-semi-scheme}
  \mathscr M_{\Gamma_N,\Q_R}\big(\partial_tX_N,\phi_N\big)
  +\mathscr A_{\Gamma_N,\Q_R}\big(X_N,\phi_N\big)
  =0,
\end{equation}
where $X_N,\phi_N\in[\V_N]^3$.  The discrete inner
product on $\Gamma_N$ is defined by pulling back to the reference domain $\Sph$:
\begin{equation*}
  \mathscr M_{\Gamma_N,\Q_R}(\partial_t X_N,\phi_N)
  :=\Q_R(\partial_t X_N\cdot\phi_N\,J_N) ,
\end{equation*}
and
\begin{equation*}
	\mathscr A_{\Gamma_N,\Q_R}(X_N,\phi_N)
	:=\Q_R \big(A_N \nabla_{\Sph} X_N\cdot
	\nabla_{\Sph}(\phi_N)\big)  .
\end{equation*}
Note that according to the identification described above, $X_N$ and $\phi_N$ on the left-hand side are realized as the push-forward spherical harmonics on $\Gamma_N$, while $X_N$ and $\phi_N$ on the right-hand side are realized as functions in $[\V_N]^3$. There is no ambiguity since the underlying domains are clear on both sides.

In the remainder of this paper, we shall prove the exponential convergence of $X_{N}(t)$ to $X(t)$.
\begin{theorem}\label{thm:main}
	Assume that the initial parametrization $X(0)$ of the exact mean curvature
	flow \eqref{eq:MCF} is analytic, and suppose that the quadrature rule $\Q_R$ satisfies the
	order condition $R\geq 3N$.  Let $T>0$ be the time given by
	Theorem~\ref{thm:cont-reg}, and let the initial approximation be
	$X_N(0):=\Lhyp_N X(0)$.  Then there exists $N_0>0$ such that, for
	$N\geq N_0$, the following error estimate holds:
	\begin{align}\label{eq:final-conv}
			\norm{X_N-X}_{L_t^\infty L_x^2([0,T]\times\Sph)}
			+
			\norm{\nabla_{\Sph} (X_N- X) \cdot n\circ X}_{L_t^2L_x^2([0,T]\times\Sph)}
			\leq C e^{-c N} .
	\end{align}
	The constants $N_0$, $C$, and $c$ depend only on $X$ and $T$.
\end{theorem}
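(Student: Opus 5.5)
We will follow the standard consistency--stability--continuation framework, working entirely on the reference sphere $\Sph$ and comparing $X_N$ with the comparison surface $X_{N,*}=\Lhyp_N X$. Set $e_N:=X_N-X_{N,*}\in[\V_N]^3$. By Lemma~\ref{lemma:anal-approx} and Theorem~\ref{thm:cont-reg}, $\norm{X-X_{N,*}}_{W^{s,\infty}}\le C_se^{-c_sN}$ uniformly on $[0,T]$. The same holds for $\partial_t X$, using analyticity in $x$ uniformly in $t$ and that $\partial_t X=\Delta_{\Gamma}\mathrm{id}\circ X$ is analytic. It therefore suffices to bound $e_N$ in the norm of \eqref{eq:final-conv}; since $e_N(0)=0$, no initial error appears.

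\textbf{Consistency.} We define the defect $d_N\in[\V_N]^3$ through
\[
\mathscr M_{\Gamma_{N,*},\Q_R}(\partial_tX_{N,*},\phi_N)+\mathscr A_{\Gamma_{N,*},\Q_R}(X_{N,*},\phi_N)=(d_N,\phi_N).
\]
Subtracting the exact weak form \eqref{eq:dziuk-weak}, pulled back as $\int_{\Sph}J\,\partial_tX\cdot\phi_N+\int_{\Sph}A\gradS X\cdot\gradS\phi_N=0$, we split $d_N$ into two parts. The first consists of replacing $(X,J,A)$ by $(X_{N,*},J_{N,*},A_{N,*})$, which is controlled by the smooth dependence \eqref{eq:JA-bar} and is $O(e^{-cN})$ in $W^{1,\infty}$. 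The second is the quadrature error $(\Q_R-\int_{\Sph})$ applied to $J_{N,*}\partial_tX_{N,*}\cdot\phi_N$ and to $A_{N,*}\gradS X_{N,*}\cdot\gradS\phi_N$. For the quadrature error we use the bound $|(\Q_R-\int)f|\le C\inf_{p_R}\norm{f-p_R}_{L^\infty}$. We write $J_{N,*}=\Pi_{N}J_{N,*}+O(e^{-cN})$, which is legitimate because $J_{N,*}$ is an analytic function of an analytic parametrization. We do the same for $A_{N,*}$ and $\partial_t X_{N,*}$. The polynomial parts then have degree at most $2N+2\le 3N\le R$, so they are integrated exactly. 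The remainders are exponentially small times $N^{k}\norm{\phi_N}_{L^2}$ by the inverse inequality. This gives $\norm{d_N}_{L^2}\le Ce^{-cN}$; note that the analyticity radius of $J_{N,*}$ must be controlled uniformly in $N$, which follows from its closeness to $J$.

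\textbf{Stability.} We subtract the consistency equation from \eqref{eq:pdf-semi-scheme} and test with $\phi_N=e_N$ (or $\partial_t e_N$ combined with $e_N$). The mass difference gives $\frac12\frac{d}{dt}\Q_R(J_N|e_N|^2)$ up to terms bounded by $C\norm{e_N}_{L^2}^2+\norm{\gradS e_N}$-dependent remainders. Since $J_N\approx J$ is positive and $\Q_R$ has positive weights, $\Q_R(J_N|e_N|^2)\sim\norm{e_N}_{L^2}^2$ by the discrete H\"older inequality and exactness for $|e_N|^2$. For the stiffness difference, we write $A_N\gradS X_N-A_{N,*}\gradS X_{N,*}$ as $\int_0^1 D\bar{\mathcal F}(\gradS X_{N,*}+\theta\gradS e_N)[\gradS e_N]\,d\theta$, where $\bar{\mathcal F}(M)=\bar A(M)M$. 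We then use the key geometric fact that the linearization of $\bar{\mathcal F}$ at the exact surface is symmetric and coercive only on the normal component. This yields $\int A\gradS e_N\cdot\gradS e_N\gtrsim \norm{\gradS e_N\cdot n\circ X}^2_{L^2}$ modulo directional terms, which is the normal-parabolicity decomposition.

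\textbf{Main obstacle.} The delicate step is that the linearized stiffness form is only coercive in the normal direction. The tangential parts of $\gradS e_N$ appear in cross terms and in the quadratic Taylor remainder. We plan to integrate by parts on $\Sph$ so that every tangential derivative is moved either onto smooth coefficients or onto the normal component. The resulting terms are bounded by $\epsilon\norm{\gradS e_N\cdot n}^2+C_\epsilon\norm{e_N}^2_{L^2}$, plus a term of the form $C\frac{d}{dt}$ of a lower-order quantity that is absorbed. The quadrature in the stiffness term on the numerical surface must also be handled. Since $\gradS e_N\in[\V_{N+1}]^3$, the quadratic-in-$\gradS e_N$ pieces multiplied by polynomial approximations of the coefficients stay within degree $\le 3N$ and are integrated exactly. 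The remaining pieces carry the factor $\norm{\gradS e_N}_{L^\infty}$.

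\textbf{Continuation argument.} We will control $\norm{\gradS e_N}_{L^\infty}$ by a bootstrap. We assume on a maximal interval $[0,t^*]$ that $\norm{e_N}_{L^2}\le e^{-cN/2}$. By the inverse inequality this gives $\norm{e_N}_{W^{1,\infty}}\le CN^{2}e^{-cN/2}\ll1$, so $J_N,A_N$ stay close to $J,A$ and all nonlinear remainders are small. Gronwall then gives $\norm{e_N}_{L^\infty L^2}^2+\norm{\gradS e_N\cdot n}_{L^2L^2}^2\le Ce^{-2cN}$. Here we replace $n\circ X_{N,*}$ by $n\circ X$ at an exponentially small cost. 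For $N\ge N_0$ this strictly improves the assumption, so $t^*=T$. The triangle inequality with the consistency bound then yields \eqref{eq:final-conv}.
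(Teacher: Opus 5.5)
Your architecture matches the paper's: the comparison surface $X_{N,*}=\Lhyp_N X$, consistency by polynomial approximation of the analytic coefficients under $R\ge 3N$, normal parabolicity of the linearized Dziuk form, the exactness argument based on $\gradS e_N\in[\V_{N+1}]^3$, and Gronwall plus continuation. The genuine gap is that you misclassify one of the two critical terms. When you test the mass difference with $e_N$, you keep $\tfrac12\tfrac{\dd}{\dd t}\Q_R(J_N|e_N|^2)$ and call the rest ``$\norm{\gradS e_N}$-dependent remainders''. But the piece $\Q_R\big(\partial_t X_{N,*}\cdot e_N\,(J_N-J_{N,*})\big)$ is not a remainder. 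Since $J_N-J_{N,*}=\nabla_G\bar J(t,\gradS X)[\gradS e_N]+\dots$, it is bilinear in $(e_N,\gradS e_N)$, of the same order as the principal part, and no size argument closes:
the exact-integral part is at best $CN\norm{e_N}_{L^2(\Sph)}^2$, which gives a Gronwall factor $e^{CNT}$;
the quadrature part, bounded crudely, is $CN^3\norm{e_N}_{L^2(\Sph)}^2$;
your bootstrap gives $CN^2e^{-cN/2}\norm{e_N}_{L^2(\Sph)}$, which only returns $\norm{e_N}_{L^2(\Sph)}\lesssim N^2e^{-cN/2}$ and does not improve the hypothesis.

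The paper handles this term with two ingredients. First, the quadrature error is treated by the same $\V_{N+1}$ device you reserve for the stiffness (see \eqref{eq:pdf-J22}). Here $\partial_tX\,\nabla_G\bar J(t,\gradS X)$ is treated as the smooth coefficient and approximated in $\V_{R-2N-1}$. If you keep $\partial_tX_{N,*}\in[\V_N]^3$ as a polynomial factor, the degree becomes at least $3N+1>R$. Second, the exact integral is integrated by parts using the normal-gauge identity $\partial_tX=-(Hn)\circ X$ (see \eqref{eq:pdf-J2-geometric-difference}). Since $\nabla_G\bar J(t,\gradS X)[\gradS e_N]$ is $J$ times the pulled-back surface divergence of $e_N$, the divergence can be moved onto $H\,n\cdot e_N$. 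This leaves only $\norm{e_N}_{L^2(\Sph)}^2$ and $\norm{e_N}_{L^2(\Sph)}\norm{\gradS e_N\cdot n\circ X}_{L^2(\Sph)}$. With a tangential velocity component this would fail, so the normal velocity is an essential input that your plan never invokes. Your ``Main obstacle'' paragraph must cover this mass term, not only the stiffness.

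There are also some smaller corrections.
In the consistency step, $\Pi_N A_{N,*}$ times $\gradS X_{N,*}\cdot\gradS\phi_N$ has degree up to $3N+2>R$, so approximate the coefficient in $\V_{R-2N-2}$, as the paper does. You do not need a uniform analyticity radius for $J_{N,*}$, because $\inf_p\norm{J_{N,*}-p}_{L^\infty(\Sph)}\le\norm{J_{N,*}-J}_{L^\infty(\Sph)}+\inf_p\norm{J-p}_{L^\infty(\Sph)}$.
The form that is coercive only in the normal direction is not $\int_{\Sph}A\gradS e_N\cdot\gradS e_N$, which is the full Dirichlet form. It is the linearization $\int_{\Sph}D\bar{\mathcal F}(\gradS X)[\gradS e_N]\cdot\gradS e_N=\mathscr A^{\rm Nor}_{\Gamma}(e_N,e_N)+\mathscr B_{\Gamma}(e_N,e_N)$, the second variation of area (Lemma~\ref{lemma:A_iden} with \eqref{eq:pdf-DGamma-identity}). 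Your integration by parts has to reproduce \eqref{eq:B-est1}, where every term of $\mathscr B$ carries $e_N\cdot n$ or $\gradS e_N\cdot n$.
With the energy $\Q_R(J_N|e_N|^2)$, the derivative $\partial_tJ_N$ contains $\gradS\partial_t e_N$. So a velocity bound like \eqref{eq:vel-est}, obtained by testing with $\partial_t e_N$, is required, not optional.
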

\begin{remark}\upshape
	The order condition $R\geq 3N$ suppresses the aliasing error generated by the
	nonlinearity; see \cite[Section 2.1.2]{CHQZ1988book}.
\end{remark}
At the beginning of the proof, we define the error $e_N(t)=X_N(t)-X_{N,*}(t)$ and impose the following induction hypothesis on $e_N$.
Suppose that $[0,T']$ with $T'\leq T$ is the maximal time interval such that
\begin{align}\label{eq:ind-hypo}
	\norm{e_N}_{L_t^\infty W_x^{1,\infty}([0,T']\times \Sph)}
	\leq 
	N^{-4} .
\end{align}
To conclude the convergence proof, it shall be shown that $T'=T$ alongside the main error estimate \eqref{eq:final-conv} in Section \ref{sec:conv-proof}.


\section{Consistency analysis}\label{sec:cons}
The consistency error $\mathscr D(\cdot)$ is defined by subtracting the continuous weak form \eqref{eq:dziuk-weak} from the scheme \eqref{eq:pdf-semi-scheme}:
\begin{align}\label{eq:cons-eq}
	&\mathscr M_{\Gamma_{N,*}(t),\Q_R}\big(\partial_tX_{N,*}(t),\phi_N\big)
	+ \mathscr M_{\Gamma(t)}\big(H(t)n(t),\phi_N\big)
	\notag\\
	&\quad+
	\mathscr A_{\Gamma_{N,*}(t),\Q_R}\big(X_{N,*}(t),\phi_N\big)
	-\mathscr A_{\Gamma(t)}\big({\rm id},\phi_N\big)
	=: \mathscr D(\phi_N) .
\end{align}
The approximation error estimates in Lemma \ref{lemma:anal-approx} imply the following
exponential accuracy of the geometric perturbation errors between $\Gamma_{N,*}(t)$ and $\Gamma(t)$ whenever the underlying flow is real analytic:

\begin{align}
	&\norm{X_{N,*}(t)-X(t)}_{W^{s,p}(\Sph)}
	+
	\norm{n_{N,*}(t)\circ X_{N,*}(t)-n(t)\circ X(t)}_{W^{s,p}(\Sph)}
	\notag\\
	&
	+
	\norm{J_{N,*}(t)-J(t)}_{W^{s,p}(\Sph)}
	+\norm{A_{N,*}(t)-A(t)}_{W^{s,p}(\Sph)}
	\leq C_{s} e^{-c_s N}. \notag
\end{align}
Consequently, we have the following norm equivalence result whose proof is standard and therefore omitted; cf. \cite[Lemma 7.1]{KLL19}.
\begin{lemma}\label{lemma:norm-equiv}
	For any $f$ defined on $\Gamma_{N,*}$, $s\geq 0$ and $1\leq p\leq \infty$, it holds that
	\begin{align}
		C_s^{-1}
		\| f\circ X_{N,*} \|_{W^{s,p}(\Sph)}
		\leq
		\| f \|_{W^{s,p}(\Gamma_{N,*})}
		\leq
		C_s
		\| f\circ X_{N,*} \|_{W^{s,p}(\Sph)} , \notag
	\end{align}
	for some constant $C_s$ and sufficiently large $N$.
\end{lemma}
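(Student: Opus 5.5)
The approach is to compare $\Gamma_{N,*}$ with the exact surface $\Gamma(t)$ through the composite map $\Psi_N:=X_{N,*}\circ X^{-1}:\Gamma(t)\to\Gamma_{N,*}(t)$. This map is an exponentially small perturbation of the identity. The norm comparison is then carried out in three layers: on $\Sph$, on $\Gamma(t)$, and on $\Gamma_{N,*}(t)$.

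\emph{First layer.} For the exact surface I will use that $X(t)$ is a smooth (indeed analytic) diffeomorphism $\Sph\to\Gamma(t)$. Its derivatives up to any order are bounded uniformly for $t\in[0,T]$, and the inverse metric is bounded, since $J\ge c_0>0$ by compactness of $[0,T]\times\Sph$ and continuity. Hence, by the chain rule for covariant derivatives (Faà di Bruno) together with a change of variables using $J$, the standard argument gives
\[
C_s^{-1}\|g\circ X\|_{W^{s,p}(\Sph)}\le\|g\|_{W^{s,p}(\Gamma)}\le C_s\|g\circ X\|_{W^{s,p}(\Sph)}
\]
for integer $s$. Non-integer $s$ then follows by interpolation, using the Sobolev--Slobodeckij characterization.

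\emph{Second layer.} Next I will transfer these bounds to $X_{N,*}$ using the displayed exponential estimate $\|X_{N,*}-X\|_{W^{k,\infty}(\Sph)}\le C_ke^{-c_kN}$. Applying it with $k$ large enough (depending on $s$) gives, for $N\ge N_0(s)$, three things:
\begin{itemize}
\item $J_{N,*}\ge c_0/2$;
\item the metric $\gradS X_{N,*}^T\gradS X_{N,*}$ is uniformly equivalent to that of $X$;
\item all derivatives of $X_{N,*}$ up to order $\lceil s\rceil+1$ are bounded by constants independent of $N$.
\end{itemize}
Moreover, $X_{N,*}$ is an immersion that is $C^1$-close to the embedding $X$, so for large $N$ it is injective. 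Injectivity follows because a $C^1$-small perturbation of an embedding of a compact manifold is an embedding, by the inverse function theorem plus a compactness argument. Consequently $\Gamma_{N,*}$ is a smooth embedded surface, and $X_{N,*}^{-1}$ is well defined.

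\emph{Third layer.} With these bounds in hand, the first-layer argument repeats verbatim for $F=X_{N,*}$. The volume element is $J_{N,*}$, bounded above and below, and covariant derivatives on $\Gamma_{N,*}$ are expressed through $(\gradS X_{N,*})^\dagger$ and the higher derivatives of $X_{N,*}$, all uniformly bounded. This yields the two-sided inequality with $N$-independent $C_s$.

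\emph{Main obstacle.} The only delicate point is uniformity in $N$ of the constants for higher derivatives. Derivatives of the inverse map involve $(\gradS X_{N,*})^\dagger$ and higher derivatives of $X_{N,*}$. I will handle this by noting that $\bar A$ and $\bar J$ in \eqref{eq:JA-bar}, and the pseudo-inverse, are smooth functions of $\gradS X$ in a neighborhood of the exact solution. Lemma~\ref{lemma:anal-approx} places $\gradS X_{N,*}$ in that neighborhood in every $W^{k,\infty}$, so the required bounds follow from the bounds for $X$ plus exponentially small corrections.
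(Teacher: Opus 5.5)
Your proposal is correct and follows the route the paper intends. The paper omits the proof as standard and presents the lemma as a direct consequence of the exponential closeness of $X_{N,*}$, $J_{N,*}$, $A_{N,*}$ and $n_{N,*}$ to their exact counterparts, which is exactly how you transfer the fixed-surface equivalence for $X(t)$ to $X_{N,*}(t)$ with $N$-uniform constants; your treatment of the lower bound on $J_{N,*}$, injectivity for large $N$, and uniform higher-derivative bounds supplies the omitted details (the auxiliary map $\Psi_N$ is never actually used, since your third layer argues directly with $X_{N,*}$).
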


\subsection{Mass consistency error}
The consistency error for the mass bilinear form reads:
\begin{align}
  &\mathscr M_{\Gamma_{N,*}(t),\Q_R}\big(\partial_tX_{N,*}(t),\phi_N\big)
    + \mathscr M_{\Gamma(t)}\big(H(t)n(t),\phi_N\big) \notag\\
  &=\Q_R\big(\Lhyp_N\partial_tX(t)\cdot\phi_N
      J_{N,*}(t)\big)
    -\int_{\Sph}\partial_tX(t)\cdot\phi_N J(t) \notag\\
  &= \Q_R \big(\Lhyp_N\partial_tX(t)\cdot\phi_N
      (J_{N,*}(t)-J(t))\big) \notag\\
  &\quad
    -\bigg(\int_{\Sph}-\Q_R \bigg)
      \big(\Lhyp_N\partial_tX(t)\cdot\phi_N J(t)\big) \notag\\
  &\quad
    -\int_{\Sph}(1-\Lhyp_N)\partial_tX(t)\cdot\phi_N J(t) \notag\\
  &=:\mathscr D_{11}(\phi_N)+\mathscr D_{12}(\phi_N)
    +\mathscr D_{13}(\phi_N). \notag
\end{align}
For the first term, we have
\begin{align}
  |\mathscr D_{11}(\phi_N)|
  &= \left|\Q_R\big(\Lhyp_N\partial_tX(t)\cdot\phi_N
      (J_{N,*}(t)-J(t))\big)\right| \notag\\
  &\leq C
      \norm{J_{N,*}(t)-J(t)}_{L^\infty(\Sph)}
      \norm{\Lhyp_N\partial_tX(t)}_{L^\infty(\Sph)}
      \norm{\phi_N}_{L^\infty(\Sph)}
      \notag\\
  &\leq C
      \norm{J_{N,*}(t)-J(t)}_{L^\infty(\Sph)}
      \big(\norm{\partial_tX(t)}_{L^\infty(\Sph)}
        +\norm{(1-\Lhyp_N)\partial_tX(t)}_{L^\infty(\Sph)}\big)
      \norm{\phi_N}_{L^\infty(\Sph)} \notag\\
  &\leq C e^{-cN}\norm{\phi_N}_{L^\infty(\Sph)}. \notag
\end{align}
The second term can be handled as follows:
\begin{align}
  |\mathscr D_{12}(\phi_N)|
  &= \left| -\left(\int_{\Sph}-\Q_R \right)
      \big(\Lhyp_N\partial_tX(t)\cdot\phi_N J(t)\big) \right| \notag\\
  &\leq C
      \inf_{\substack{p_{R-2N}\\ \in \V_{R-2N}}}
      \norm{J(t)-p_{R-2N}}_{L^\infty(\Sph)}
      \norm{\Lhyp_N\partial_tX(t)}_{L^\infty(\Sph)}
      \norm{\phi_N}_{L^\infty(\Sph)} \notag\\
  &\leq C e^{-cN}\norm{\phi_N}_{L^\infty(\Sph)}, \notag
\end{align}
and, for $\mathscr D_{13}$, we estimate
\begin{align}
  |\mathscr D_{13}(\phi_N)|
  &= \left| -\int_{\Sph}(1-\Lhyp_N)\partial_tX(t)\cdot\phi_N J(t) \right|
  \notag\\
  &\leq C
      \inf_{p_N\in\V_N}
      \norm{\partial_tX(t)-p_N}_{L^\infty(\Sph)}
      \norm{J(t)}_{L^\infty(\Sph)}
      \norm{\phi_N}_{L^2(\Sph)} \notag\\
  &\leq C e^{-cN}\norm{\phi_N}_{L^2(\Sph)}. \notag
\end{align}
Since $\partial_t X = (-Hn)\circ X$ is analytic, Lemma \ref{lemma:anal-approx} gives exponential convergence for $\mathscr D_{13}$.

\subsection{Stiffness consistency error}
The consistency error for the stiffness bilinear form can be decomposed as follows:
\begin{align}
  &\mathscr A_{\Gamma_{N,*}(t),\Q_R}\big(X_{N,*}(t),\phi_N\big)
    -\mathscr A_{\Gamma(t)}\big({\rm id},\phi_N\big) \notag\\
  &=\Q_R\big(
      A_{N,*}(t)\gradS(\Lhyp_NX(t))\cdot\gradS\phi_N\big)
    -\int_{\Sph}A(t)\gradS X(t)\cdot\gradS\phi_N \notag\\
  &=\Q_R\big((A_{N,*}(t)-A(t))
      \gradS(\Lhyp_NX(t))\cdot\gradS\phi_N\big) \notag\\
  &\quad
    -\bigg(\int_{\Sph}-\Q_R \bigg)
      \big(A(t)\gradS(\Lhyp_NX(t))\cdot\gradS\phi_N\big) \notag\\
  &\quad
    -\int_{\Sph}A(t)\gradS\big((1-\Lhyp_N)X(t)\big)\cdot\gradS\phi_N
      \notag\\
  &=:\mathscr D_{21}(\phi_N)+\mathscr D_{22}(\phi_N)
    +\mathscr D_{23}(\phi_N). \notag
\end{align}
Here,
\begin{align}
  |\mathscr D_{21}(\phi_N)|
  &=\left|\Q_R\big((A_{N,*}(t)-A(t))
      \gradS(\Lhyp_NX(t))\cdot\gradS\phi_N\big)\right| \notag\\
  &\leq C
      \norm{A_{N,*}(t)-A(t)}_{L^\infty(\Sph)}
      \norm{\gradS(\Lhyp_NX(t))}_{L^\infty(\Sph)}
      \norm{\gradS\phi_N}_{L^\infty(\Sph)} \notag\\
  &\leq C e^{-cN}\norm{\gradS\phi_N}_{L^\infty(\Sph)}. \notag
\end{align}
The quadrature consistency error satisfies
\begin{align}
  |\mathscr D_{22}(\phi_N)|
  &= \left| -\left(\int_{\Sph}-\Q_R \right)
      \big(A(t)\gradS(\Lhyp_NX(t))\cdot\gradS\phi_N\big) \right|
      \notag\\
  &\leq C
      \inf_{\substack{p_{R-2N-2}\\ \in\V_{R-2N-2}}}
      \norm{A(t)-p_{R-2N-2}}_{L^\infty(\Sph)}
      \norm{\gradS\Lhyp_NX(t)}_{L^\infty(\Sph)}
      \norm{\gradS\phi_N}_{L^\infty(\Sph)} \notag\\
  &\leq C e^{-cN}\norm{\gradS\phi_N}_{L^\infty(\Sph)}. \notag
\end{align}
Finally, we have
\begin{align}
  |\mathscr D_{23}(\phi_N)|
  &= \left| -\int_{\Sph}
      A(t)\gradS\big((1-\Lhyp_N)X(t)\big)\cdot\gradS\phi_N \right|
      \notag\\
  &\leq C
      \norm{\nabla_{\Sph}(1-\Lhyp_N)X(t)}_{L^2(\Sph)}
      \norm{A(t)}_{L^\infty(\Sph)}
      \norm{\gradS\phi_N}_{L^2(\Sph)} \notag\\
  &\leq C e^{-cN}\norm{\gradS\phi_N}_{L^2(\Sph)} . \notag
\end{align}
In summary, 
collecting all the estimates above, we obtain
\begin{align}\label{eq:D-est}
	| \mathscr D(\phi_N) |
	\leq C e^{-cN}\norm{\phi_N}_{L^2(\Sph)},
\end{align}
where we have used the inverse inequality and absorbed the factor $N^2$ into the exponential rate by adjusting the generic constants $C$ and $c$ suitably, i.e.,
\begin{align}
	C e^{-cN}\norm{\phi_N}_{W^{1,\infty}(\Sph)}
	\leq
	C N^{2} e^{-cN} \norm{\phi_N}_{L^{2}(\Sph)}
	\leq
	C e^{-c N} \norm{\phi_N}_{L^{2}(\Sph)}. \notag
\end{align}

\section{Stability analysis}\label{sec:stab}

We first define the intermediate surface $\Gamma_{N,\theta}(t)$ and its parametrization $X_{N,\theta}(t)\in[\V_N]^3$:
\begin{equation*}
	X_{N,\theta}(t)=(1-\theta)X_{N,*}(t)+\theta X_N(t),
	\qquad
	\Gamma_{N,\theta}(t)=X_{N,\theta}(t)(\Sph).
\end{equation*}
For $\Gamma_{N,\theta}(t)$, we write the corresponding pullback coefficients of mass and stiffness bilinear forms as
$J_{N,\theta}(t)$ and $A_{N,\theta}(t)$. 
As in \eqref{eq:JA-bar}, $J_{N,\theta}(t)$ and $A_{N,\theta}(t)$ take the form
\begin{align}
	J_{N,\theta}(t,x) = \bar J(t, \nabla_{\Sph} X_{N,\theta}(t, x)), \qquad
	A_{N,\theta}(t,x) = \bar A(t, \nabla_{\Sph} X_{N,\theta}(t, x)). \notag
\end{align}
Note that $\bar J$ and $\bar A$ are smooth around the exact solution.
If we denote the second argument of $\bar J(\cdot,\cdot)$ and $\bar A(\cdot,\cdot)$ as variable $G$, then we have the following chain rule:
	\begin{align}
		\frac{\dd}{\dd\theta}\bar J(t,\gradS X_{N,\theta})
		=\nabla_G\bar J(t,\gradS X_{N,\theta})
		[\frac{\dd}{\dd\theta}\gradS X_{N,\theta}]
		=\nabla_G\bar J(t,\gradS X_{N,\theta})\,[\gradS e_N]
		, \notag\\
		\frac{\dd}{\dd\theta}\bar A(t,\gradS X_{N,\theta})
		=\nabla_G\bar A(t,\gradS X_{N,\theta})
		[\frac{\dd}{\dd\theta}\gradS X_{N,\theta}]
		=\nabla_G\bar A(t,\gradS X_{N,\theta})\,[\gradS e_N] , \notag
	\end{align}
	where, by our notation, $[\cdot]$ contracts with $\nabla_G$ into a scalar.
		Moreover, by the Lipschitz continuity of $\bar J$ and $\bar A$,
	\begin{align}
		\norm{\bar J(t,\gradS X_{N,\theta})-\bar J(t,\gradS X_{N,*})}_{L^\infty(\Sph)}
		\leq C \norm{\theta \nabla_{\Sph}e_N(t)}_{L^\infty(\Sph)}
		, \\
		\norm{\bar A(t,\gradS X_{N,\theta})-\bar A(t,\gradS X_{N,*})}_{L^\infty(\Sph)}
		\leq C \norm{\theta \nabla_{\Sph}e_N(t)}_{L^\infty(\Sph)}. \label{eq:Abar-Lip}
		\end{align}

\subsection{Mass error equation}

The error of mass bilinear form can be decomposed as
\begin{align}
  &\mathscr M_{\Gamma_{N},\Q_R}(\partial_tX_N,\phi_N)
    -
    \mathscr M_{\Gamma_{N,*},\Q_R}(\partial_tX_{N,*},\phi_N) \notag\\
  &=\Q_R\big(\partial_tX_N\cdot\phi_N J_N(t)\big)
    -\Q_R\big(\partial_tX_{N,*}\cdot\phi_N
      J_{N,*}(t)\big) \notag\\
  &=\int_{\Sph}\partial_te_N\cdot\phi_N J_{N,*}(t)
    -\bigg(\int_{\Sph}-\Q_R \bigg)\big(\partial_te_N\cdot\phi_N
      J_{N,*}(t)\big) \notag\\
  &\quad
    +\Q_R\big(\partial_tX_N\cdot\phi_N
      (J_N(t)-J_{N,*}(t))\big) \notag\\
  &=:\int_{\Gamma_{N,*}(t)}\partial_te_N\cdot\phi_N
    +\mathscr J_1(\phi_N)+\mathscr J_2(\phi_N). \notag
\end{align}
The quadrature error $\mathscr J_1$ satisfies
\begin{align}
  |\mathscr J_1(\phi_N)|
  &=\left|-\left(\int_{\Sph}-\Q_R \right)\big(\partial_te_N\cdot\phi_N
      J_{N,*}(t)\big) \right| \notag\\
  &= \bigg| -\left(\int_{\Sph}-\Q_R \right)(\partial_te_N\cdot\phi_N J(t)) \notag\\
  &\quad
    -\left(\int_{\Sph}-\Q_R \right)\big(\partial_te_N\cdot\phi_N
      (J_{N,*}(t)-J(t))\big) \bigg| \notag\\
  &\leq C
      \bigg(\inf_{\substack{p_{R-2N}\\ \in\V_{R-2N}}}
      \norm{J(t)-p_{R-2N}}_{L^\infty(\Sph)}
      +
      \norm{J_{N,*}(t)-J(t)}_{L^\infty(\Sph)}\bigg)
       \notag\\
  &\quad
  \times
      \norm{\partial_te_N}_{L^\infty(\Sph)}
      \norm{\phi_N}_{L^\infty(\Sph)}
       \notag\\
  &\leq C e^{-cN}
      \norm{\partial_te_N}_{L^\infty(\Sph)} \norm{\phi_N}_{L^\infty(\Sph)} . \notag
\end{align}
$\mathscr J_2$ can be further decomposed as:
\begin{align}
  \mathscr J_2(\phi_N)
  &=  \Q_R\big(\partial_tX_N\cdot\phi_N
      (J_N(t)-J_{N,*}(t))\big)  \notag\\
  &=\int_{\Sph}\partial_tX(t)\cdot\phi_N
      (J_N(t)-J_{N,*}(t)) \notag\\
  &\quad
    -\left(\int_{\Sph}-\Q_R \right)\big(\partial_tX(t)\cdot\phi_N
      (J_N(t)-J_{N,*}(t))\big) \notag\\
  &\quad+
    \Q_R\big(\partial_t(X_N-X)(t)\cdot\phi_N
      (J_N(t)-J_{N,*}(t))\big)  \notag\\
	&=: \mathscr J_{21}(\phi_N) + \mathscr J_{22}(\phi_N) + \mathscr J_{23}(\phi_N)
      . \notag
\end{align}
The domain discrepancy error \(\mathscr J_{21}\) is estimated by
\begin{align}\label{eq:pdf-J2-geometric-difference}
	|\mathscr J_{21}(\phi_N)|
	&=
  \left|\int_{\Sph}\partial_tX(t)\cdot\phi_N
      (J_N(t)-J_{N,*}(t)) \right| \notag\\
  &=\left| \int_0^1\frac{\dd}{\dd\theta}
      \int_{\Sph}\partial_tX(t)\cdot\phi_N J_{N,\theta}(t)
      \,\dd\theta \right| \notag\\
  &= \left| \int_0^1\int_{\Sph}\partial_tX(t)\cdot\phi_N
      \nabla_G\bar J(t,\gradS X_{N,\theta})[\gradS e_N]
      \,\dd\theta \right| \notag\\
  &= \bigg| \int_0^1\int_{\Sph}\partial_tX(t)\cdot\phi_N
      \nabla_G\bar J(t,\gradS X)[\gradS e_N]
      \,\dd\theta \notag\\
  &\quad+
    \int_0^1\int_{\Sph}\partial_tX(t)\cdot\phi_N
      \big(\nabla_G\bar  J(t,\gradS X_{N,\theta})
      - \nabla_G\bar  J(t,\gradS X)\big)[\gradS e_N]
      \,\dd\theta \bigg| \notag\\
  &\leq C
      (\norm{\phi_N}_{L^2(\Sph)} + \norm{\gradS\phi_N\cdot n\circ X}_{L^2(\Sph)})
      \norm{e_N}_{L^2(\Sph)} \notag\\
  &\quad+
      \big(C e^{-cN}+\norm{\gradS e_N}_{L^\infty(\Sph)}\big)
      \norm{\gradS e_N}_{L^\infty(\Sph)}
      \norm{\phi_N}_{L^2(\Sph)}
       ,
\end{align}
where the last inequality uses integration by parts on $\Sph$ (cf. \cite[Lemma 5.1]{BL2024}) to move the gradient of $\bar J$ to the front of $\phi_N$, and the relation $\partial_tX = (-Hn)\circ X$, yielding the term
$\nabla_{\Sph}\phi_N\cdot n\circ X$ up to an $L^2$ remainder.
This directional $H^1$ term is crucial and can be controlled by the normal parabolicity in Section \ref{sec:norm-parab}.

For $\mathscr J_{22}$, we have
\begin{align}\label{eq:pdf-J22}
	|\mathscr J_{22}(\phi_N)|
	&= \left| -\left(\int_{\Sph}-\Q_R \right)
	\big(\partial_t X(t)\cdot\phi_N (J_N(t)-J_{N,*}(t)) \big) \right|
	\notag\\
	&= \left| -\int_0^1\frac{\dd}{\dd\theta}
	\left(\int_{\Sph}-\Q_R \right)
	\big(\partial_t X\cdot\phi_N J_{N,\theta}(t)\big)
	\,\dd\theta \right| \notag\\
	&= \bigg| -\int_0^1\left(\int_{\Sph}-\Q_R \right)
	\big(	\partial_t X\cdot\phi_N \nabla_G\bar J(t,\gradS X_{N,\theta})[\gradS e_N]\,
	\big)\,\dd\theta \bigg| \notag\\
	&\leq C
	\bigg(
	\inf_{\substack{p_{R-2N-1}\\ \in\V_{R-2N-1}}}
	\norm{\partial_t X\nabla_G\bar J(t,\nabla_{\Sph}X)-p_{R-2N-1}}_{L^\infty(\Sph)}
	\notag\\
	&\quad
	+
	\norm{\nabla_G\bar J(t,\gradS X_{N,\theta}) 
		- \nabla_G\bar J(t,\gradS X) }
	_{L^\infty(\Sph)}
	\norm{\partial_t X}_{L^\infty(\Sph)}
	\bigg) \notag\\
	&\quad\times
	\norm{ \gradS e_N}_{L^\infty(\Sph)}
	\norm{ \phi_N}_{L^\infty(\Sph)} \notag\\
	&\leq C
	\big( e^{-cN}+\norm{\gradS e_N}_{L^\infty(\Sph)}\big)
	\norm{\gradS e_N}_{L^\infty(\Sph)} \norm{ \phi_N}_{L^\infty(\Sph)} ,
\end{align}
and H\"older's inequality gives the following estimate for $\mathscr J_{23}$:
\begin{align}
	|\mathscr J_{23}(\phi_N)|
	&\leq C
	 (e^{-cN} + \norm{\partial_t e_N}_{L^\infty(\Sph)})
	\norm{\gradS e_N}_{L^\infty(\Sph)}
	\norm{\phi_N}_{L^\infty(\Sph)} . \notag
\end{align}

In summary, if we define
\begin{equation*}
	\mathscr J(\phi_N)
	:=\mathscr J_{1}(\phi_N)+\mathscr J_{2}(\phi_N),
\end{equation*}
then it satisfies
\begin{align}
	| \mathscr J(\phi_N) | 
	&\leq
	C e^{-cN}
	\norm{\partial_te_N}_{L^\infty(\Sph)} \norm{\phi_N}_{L^\infty(\Sph)}
	\notag\\
	&\quad+
	C
	\big(e^{-cN} + \norm{\gradS e_N}_{L^\infty(\Sph)} + \norm{\partial_t e_N}_{L^\infty(\Sph)}\big)
	\norm{\gradS e_N}_{L^\infty(\Sph)}
	\norm{\phi_N}_{L^\infty(\Sph)}
	\notag\\
	&\quad+
	(\norm{\phi_N}_{L^2(\Sph)} + \norm{\gradS\phi_N\cdot n\circ X}_{L^2(\Sph)})
	\norm{e_N}_{L^2(\Sph)}
	\notag\\
	&\quad+
	\big(C e^{-cN}+\norm{\gradS e_N}_{L^\infty(\Sph)}\big)
	\norm{\gradS e_N}_{L^\infty(\Sph)} \norm{\phi_N}_{L^2(\Sph)} .
	\label{eq:J-est}
\end{align}

\subsection{Stiffness error equation}

For the stiffness part, we have
\begin{align}
  &\mathscr A_{\Gamma_N,\Q_R}(X_N,\phi_N)
    -\mathscr A_{\Gamma_{N,*},\Q_R}(X_{N,*},\phi_N) \notag\\
  &=\Q_R\big(A_N(t)\gradS X_N\cdot\gradS\phi_N\big)
    -\Q_R\big(A_{N,*}(t)\gradS X_{N,*}\cdot\gradS\phi_N\big)
       \notag\\
  &=\int_{\Sph}A_N(t)\gradS X_N\cdot\gradS\phi_N
    -\int_{\Sph}A_{N,*}(t)\gradS X_{N,*}\cdot\gradS\phi_N \notag\\
  &\quad
    -\bigg(\int_{\Sph}-\Q_R \bigg)
      \big(A_N(t)\gradS X_N\cdot\gradS\phi_N\big) \notag\\
  &\quad
    +\bigg(\int_{\Sph}-\Q_R \bigg)
      \big(A_{N,*}(t)\gradS X_{N,*}\cdot\gradS\phi_N\big) \notag\\
  &=\int_{\Sph}A_N(t)\gradS X_N\cdot\gradS\phi_N
    -\int_{\Sph}A_{N,*}(t)\gradS X_{N,*}\cdot\gradS\phi_N \notag\\
  &\quad
    -\bigg(\int_{\Sph}-\Q_R \bigg)
      \big(A_N(t)\gradS e_N\cdot\gradS\phi_N\big) \notag\\
  &\quad
    -\bigg(\int_{\Sph}-\Q_R \bigg)
      \big((A_N(t)-A_{N,*}(t))\gradS X_{N,*}\cdot\gradS\phi_N\big)
      \notag\\
  &=:\int_{\Gamma_N}\nabla_{\Gamma_N}X_N\cdot\nabla_{\Gamma_N}\phi_N
    -\int_{\Gamma_{N,*}}\nabla_{\Gamma_{N,*}}X_{N,*}\cdot
      \nabla_{\Gamma_{N,*}}\phi_N \notag\\
  &\quad+\mathscr K_1(\phi_N)+\mathscr K_2(\phi_N). \notag
\end{align}
The term \(\mathscr K_1\) satisfies
\begin{align}
  |\mathscr K_1(\phi_N)|
  &= \bigg| -\left(\int_{\Sph}-\Q_R \right)
	\big(A_N(t)\gradS e_N\cdot\gradS\phi_N\big) \bigg| \notag\\
  &\leq C
      \bigg(
      \norm{A_N(t)-A(t)}_{L^\infty(\Sph)}
      +
      \inf_{\substack{p_{R-2N-2}\\ \in\V_{R-2N-2}}}
      \norm{A(t)-p_{R-2N-2}}_{L^\infty(\Sph)}
      \bigg)
      \notag\\
  &\quad\times
      \|\gradS e_N\|_{L^\infty(\Sph)} 
      \|\gradS\phi_N\|_{L^\infty(\Sph)}
      \notag\\
  &\leq C
      \big(e^{-cN}
      +\norm{\gradS e_N}_{L^\infty(\Sph)}\big)
     \|\gradS e_N\|_{L^\infty(\Sph)} 
     \|\gradS\phi_N\|_{L^\infty(\Sph)} , \notag
\end{align}
and, for $\mathscr K_2$, we have
\begin{align}\label{eq:pdf-K2}
  |\mathscr K_2(\phi_N)|
  &= \left| -\left(\int_{\Sph}-\Q_R \right)
      \big((A_N(t)-A_{N,*}(t))\gradS X_{N,*}\cdot\gradS\phi_N\big) \right|
      \notag\\
  &= \left| -\int_0^1\frac{\dd}{\dd\theta}
      \left(\int_{\Sph}-\Q_R \right)
      \big(A_{N,\theta}(t)\gradS X_{N,*}\cdot\gradS\phi_N\big)
      \,\dd\theta \right| \notag\\
  &= \bigg| -\int_0^1\left(\int_{\Sph}-\Q_R \right)
      \big(\nabla_G\bar A(t,\gradS X_{N,\theta})[\gradS e_N]\,
      \gradS X_{N,*}\cdot\gradS\phi_N\big)\,\dd\theta \bigg| \notag\\
  &\leq C
      \bigg(
      \inf_{\substack{p_{R-2N-2}\\ \in\V_{R-2N-2}}}
      \norm{\nabla_G\bar A(t,\nabla_{\Sph}X)
      \nabla_{\Sph}X-p_{R-2N-2}}_{L^\infty(\Sph)}
            \notag\\
      &\quad
      +
      \norm{\nabla_G\bar A(t,\gradS X_{N,\theta}) \nabla_{\Sph}X_{N,*}
      - \nabla_G\bar A(t,\gradS X) \nabla_{\Sph}X}
        _{L^\infty(\Sph)}
      \bigg) \notag\\
  &\quad\times
  \norm{\gradS e_N}_{L^\infty(\Sph)}
      \norm{ \gradS\phi_N}_{L^\infty(\Sph)} \notag\\
  &\leq C
      \big( e^{-cN}+\norm{\gradS e_N}_{L^\infty(\Sph)}\big)
      \norm{\gradS e_N}_{L^\infty(\Sph)} \norm{ \gradS\phi_N}_{L^\infty(\Sph)} ,
\end{align}
where, in the last line, we have used the Lipschitz continuity estimate \eqref{eq:Abar-Lip}.

Combining the two estimates above and defining
\begin{equation*}
	\mathscr K(\phi_N)
	:=\mathscr K_{1}(\phi_N)+\mathscr K_{2}(\phi_N),
\end{equation*}
we finally arrive at
	\begin{align}\label{eq:K-est}
		| \mathscr K(\phi_N) | 
	&\leq 
		C\big( e^{-cN}+\norm{\gradS e_N}_{L^\infty(\Sph)}\big)
		\norm{\gradS e_N}_{L^\infty(\Sph)}
		\norm{\gradS \phi_N}_{L^\infty(\Sph)}.
	\end{align}

\subsection{Normal parabolicity}\label{sec:norm-parab}

For $\R^3$-valued functions $u,v$ on a smooth surface $\Sigma$, we define
\begin{align}
  \mathscr A_{\Sigma}(u,v)
  &:=\int_{\Sigma}\nabla_{\Sigma}u\cdot\nabla_{\Sigma}v,\notag\\
  \mathscr A^{\rm Nor}_{\Sigma}(u,v)
  &:=\int_{\Sigma}
      [(\nabla_{\Sigma}u)n]\cdot[(\nabla_{\Sigma}v)n],\notag\\
  \mathscr A^{\rm Tan}_{\Sigma}(u,v)
  &:=\int_{\Sigma}
      \tr\!\big((\nabla_{\Sigma}u)(I-nn^\top)
      (\nabla_{\Sigma}v)^\top\big),\notag\\
  \mathscr B_{\Sigma}(u,v)
  &:=\int_{\Sigma}
      (\nabla_{\Sigma}\cdot u)(\nabla_{\Sigma}\cdot v)
      -\tr(\nabla_{\Sigma}u\nabla_{\Sigma}v). \notag
\end{align}
Thus $\mathscr A_\Sigma=\mathscr A_\Sigma^{\rm Nor}
+\mathscr A_\Sigma^{\rm Tan}$.
We define the surface partial derivative $\ud_i u := (\nabla_\Sigma u)_i $, $i=1,2,3$.
Using the Einstein summation convention, with
\begin{equation*}
  (D_\Sigma u)_{rl}:=-\ud_lu_r-\ud_ru_l+\delta_{rl}\ud_m u_m ,
\end{equation*}
and $P=I-nn^\top=\nabla_\Sigma{\rm id}$, we have the following identity (see \cite[Eq. (5.8)]{BL2024})
\begin{align}\label{eq:pdf-DGamma-identity}
  \int_\Sigma\nabla_\Sigma{\rm id}\cdot(D_\Sigma u)\nabla_\Sigma v
  &=-\mathscr A_\Sigma^{\rm Tan}(u,v)+\mathscr B_\Sigma(u,v).
\end{align}
The tensor $D_\Sigma u$ plays an important role in the lemma below; see also
\cite[Lemma 7.1]{KLL19}.
\begin{lemma}\label{lemma:A_iden}
	For any $f_N,g_N\in\V_N$, it holds that
	\begin{align}
		&\int_{\Gamma_N} \nabla_{\Gamma_N} f_N \cdot
		\nabla_{\Gamma_N} g_N
		-\int_{\Gamma_{N,*}} \nabla_{\Gamma_{N,*}} f_N \cdot
		\nabla_{\Gamma_{N,*}} g_N \notag\\
		&\qquad =
		\int_0^1 \int_{\Gamma_{N,\theta}}
		\nabla_{\Gamma_{N,\theta}} f_N \cdot
		(D_{\Gamma_{N,\theta}} e_N)
		\nabla_{\Gamma_{N,\theta}} g_N\,\d\theta. \notag
	\end{align}
\end{lemma}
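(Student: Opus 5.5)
We prove the identity by the fundamental theorem of calculus in $\theta$, pulling everything back to the fixed sphere. The stiffness integral on $\Gamma_{N,\theta}$ is $\int_{\Sph} A_{N,\theta}\gradS f_N\cdot\gradS g_N$, with $A_{N,\theta}=\bar A(t,\gradS X_{N,\theta})$, and $f_N,g_N$ do not depend on $\theta$ as coefficient vectors. Hence the left-hand side equals
\begin{align*}
\int_0^1\frac{\dd}{\dd\theta}\int_{\Gamma_{N,\theta}}\nabla_{\Gamma_{N,\theta}} f_N\cdot\nabla_{\Gamma_{N,\theta}} g_N\,\dd\theta .
\end{align*}
The whole lemma therefore reduces to a pointwise-in-$\theta$ formula for this derivative. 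The velocity of the surface family is $\partial_\theta X_{N,\theta}=e_N$, pushed forward to $\Gamma_{N,\theta}$ as a function in $(X_{N,\theta})_*(\V_N)$. The material derivative of $f_N$ and $g_N$ along this family vanishes, because their pullbacks to $\Sph$ are fixed. The segment $X_{N,\theta}$ stays a regular parametrization for every $\theta\in[0,1]$, by the induction hypothesis \eqref{eq:ind-hypo}. Consequently all quantities are differentiable in $\theta$.

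The key step is the standard transport formula for the Dirichlet form on an evolving surface with velocity $v$ (cf. \cite[Lemma 7.1]{KLL19}, \cite{DE2013}). It rests on two pointwise identities. The first is the derivative of the area element: $\partial^\bullet_\theta(\text{area})=(\nabla_\Sigma\cdot v)\,\text{area}$. The second is the commutator $\partial^\bullet_\theta(\ud_i f)=\ud_i(\partial^\bullet_\theta f)-(\ud_i v_l-n_in_r\ud_l v_r)\ud_l f$. I would derive both directly from the pulled-back expressions $J=\bar J(t,\gradS X)$ and $A=\bar A(t,\gradS X)$ by differentiating $\bar J$ and $\bar A$ in $G$ in the direction $\gradS e_N$. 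This works because $A$ is $J$ times $(\gradS X)^\dagger((\gradS X)^\dagger)^T$. The derivative of the pseudo-inverse factor produces $-(\nabla_\Sigma v)$-type terms on each side, and the derivative of $J$ produces $\nabla_\Sigma\cdot v$. Since $\partial^\bullet f_N=\partial^\bullet g_N=0$, we obtain
\begin{align*}
\frac{\dd}{\dd\theta}\int_{\Gamma_{N,\theta}}\nabla f_N\cdot\nabla g_N
=\int_{\Gamma_{N,\theta}}\Big[(\nabla\cdot v)\,\nabla f_N\cdot\nabla g_N-\nabla f_N\cdot\big(\nabla v+(\nabla v)^T\big)\nabla g_N\Big].
\end{align*}
The normal components produced by the commutator drop out, because $\nabla_\Sigma f_N$ and $\nabla_\Sigma g_N$ are tangential. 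Writing this integrand in index form gives $\ud_l f\,(\delta_{rl}\ud_m v_m-\ud_l v_r-\ud_r v_l)\ud_r g$. This is exactly $\nabla_{\Gamma_{N,\theta}} f_N\cdot(D_{\Gamma_{N,\theta}}e_N)\nabla_{\Gamma_{N,\theta}} g_N$ with $v=e_N$. Integrating over $\theta\in[0,1]$ then gives the claim.

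The main obstacle is bookkeeping rather than analysis. We must verify carefully that the tangential projection acting on the surface gradients removes the terms $n_in_r\ud_lv_r$, which are the normal parts of $\nabla_\Sigma v$ that are not symmetric. We must also check that the sign and symmetrization in $D_\Sigma$ match the derivative of the pseudo-inverse metric, i.e. $\partial_\theta g^{-1}=-g^{-1}(\partial_\theta g)g^{-1}$ with $\partial_\theta g$ the symmetrized $\gradS X^T\gradS e_N$. I would do this computation in local coordinates on $\Sph$, where $A=\sqrt{\det g}\,g^{-1}$ and $\partial_\theta\sqrt{\det g}=\tfrac12\sqrt{\det g}\,\tr(g^{-1}\partial_\theta g)$. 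This reproduces the divergence term and the two symmetric gradient terms, after which we push forward to $\Gamma_{N,\theta}$.
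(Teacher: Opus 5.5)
Your proposal is correct and follows the same route as the paper. The paper gives no proof of its own and defers to \cite[Lemma 7.1]{KLL19}, which is proved exactly as you outline: the fundamental theorem of calculus in $\theta$, plus the Leibniz formula for the Dirichlet form along the linear interpolation with velocity $e_N$, using that $f_N,g_N$ have vanishing material derivative. Your local-coordinate check with $A=\sqrt{\det g}\,g^{-1}$ correctly recovers the divergence term and the symmetrized gradient terms of $D_{\Gamma_{N,\theta}}e_N$, and it is a clean way to confirm the signs.
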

Lemma \ref{lemma:A_iden} and the identity \eqref{eq:pdf-DGamma-identity} immediately lead to the following computations
\begin{align}
	&\int_{\Gamma_N} \nabla_{\Gamma_N} X_N \cdot
	\nabla_{\Gamma_N}\phi_N
	-\int_{\Gamma_{N,*}} \nabla_{\Gamma_{N,*}} X_{N,*} \cdot
	\nabla_{\Gamma_{N,*}}\phi_N \notag\\
	&=\int_{\Gamma_{N,\theta}}
	\nabla_{\Gamma_{N,\theta}} X_{N,\theta} \cdot
	\nabla_{\Gamma_{N,\theta}}\phi_N
	\bigg|_{\theta=0}^{\theta=1} \notag\\
	&=\int_0^1\frac{\d}{\d\theta}
	\int_{\Gamma_{N,\theta}}
	\nabla_{\Gamma_{N,\theta}} X_{N,\theta} \cdot
	\nabla_{\Gamma_{N,\theta}}\phi_N\,\d\theta \notag\\
	&=\int_0^1 \int_{\Gamma_{N,\theta}}
	\nabla_{\Gamma_{N,\theta}} e_N \cdot
	\nabla_{\Gamma_{N,\theta}}\phi_N\,\d\theta \notag\\
	&\quad+
	\int_0^1 \int_{\Gamma_{N,\theta}}
	\nabla_{\Gamma_{N,\theta}} X_{N,\theta} \cdot
	D_{\Gamma_{N,\theta}} e_N
	\nabla_{\Gamma_{N,\theta}}\phi_N\,\d\theta  \notag\\
	&= \int_0^1
	\big[
	\mathscr A_{\Gamma_{N,\theta}}(e_N,\phi_N)
	- \mathscr A_{\Gamma_{N,\theta}}^{\rm Tan}(e_N,\phi_N)
	+ \mathscr B_{\Gamma_{N,\theta}}(e_N,\phi_N)
	\big]\,\d\theta \notag\\
	&= \mathscr A_{\Gamma_{N,*}}^{\rm Nor} (e_N,\phi_N)
	+ 
	\mathscr B_{\Gamma_{N,*}}(e_N,\phi_N) + \mathscr L(\phi_N),
	\label{eq:norm_para}
\end{align}
where, for simplicity, we have introduced the notation
\begin{align}
	\mathscr L(\phi_N)
	&:= \int_0^1
	\big[
	\mathscr A_{\Gamma_{N,\theta}}^{\rm Nor}(e_N,\phi_N)
	- \mathscr A_{\Gamma_{N,*}}^{\rm Nor}(e_N,\phi_N)
	\big]\,\d\theta  \notag\\
	&\quad+ \int_0^1
	\big[
	\mathscr B_{\Gamma_{N,\theta}}(e_N,\phi_N)
	- \mathscr B_{\Gamma_{N,*}}(e_N,\phi_N)
	\big]\,\d\theta. \notag
\end{align}
The remainder $\mathscr L$ is a higher-order error term and can be estimated in a standard way (cf. \cite[Eq. (5.22)]{BL2024}).  Applying the
fundamental theorem of calculus once more, together with norm equivalence, yields
\begin{align}
	|\mathscr L(\phi_N)|
	\leq C
	(e^{-cN}+\norm{\nabla_{\Sph}e_N}_{L^\infty(\Sph)})
	\norm{\nabla_{\Sph}e_N}_{L^2(\Sph)}
	\norm{\nabla_{\Sph}\phi_N}_{L^2(\Sph)}. \notag
\end{align}
Moreover, an integration-by-parts argument (cf. \cite[Eq. (2.1)]{BL22A} and
\cite[Eq. (5.9)]{BL2024}) gives
\begin{align}\label{eq:B-iden}
	\mathscr B_{\Gamma_{N,*}}(e_N, \phi_N)
	&= \int_{\Gamma_{N,*}}
	e_{N,j}\,\ud_i\phi_{N,i}\,H_{N,*}(n_{N,*})_j
	-
	\int_{\Gamma_{N,*}}
	e_{N,j}\,\ud_j\phi_{N,i}\,H_{N,*}(n_{N,*})_i \notag\\
	&\quad
	+\int_{\Gamma_{N,*}}
	e_{N,j}\,\ud_k\phi_{N,i}\,(n_{N,*})_i (h_{N,*})_{jk} 
	-\int_{\Gamma_{N,*}}
	e_{N,j}\,\ud_k\phi_{N,i}\,(n_{N,*})_j (h_{N,*})_{ik},
\end{align}
where $n_{N,*}$, $h_{N,*}$ and $H_{N,*}:= \sum_i(h_{N,*})_{ii}$ are the outward normal vector, the second fundamental form, and the mean curvature
on $\Gamma_{N,*}$.
For $\mathscr A_{\Gamma_{N,*}}^{\rm Nor}$ and $\mathscr B_{\Gamma_{N,*}}$,
H\"older's inequality immediately gives
\begin{align}
		 |\mathscr A_{\Gamma_{N,*}}^{\rm Nor}(e_N,\phi_N)|
		\leq C
		\norm{\nabla_{\Sph}e_N}_{L^2(\Sph)}
		\norm{\nabla_{\Sph}\phi_N}_{L^2(\Sph)} , \notag
\end{align}
and, with integration by parts,
\begin{align}
	 |\mathscr B_{\Gamma_{N,*}}(e_N,\phi_N)|
	\leq C
	\min\left\{
	\norm{e_N}_{L^2(\Sph)} \norm{\phi_N}_{H^1(\Sph)},
	\norm{e_N}_{H^1(\Sph)} \norm{\phi_N}_{L^2(\Sph)}
	\right\}. \notag
\end{align}
When choosing $\phi_N=e_N$, each term on the right-hand side of \eqref{eq:B-iden}
contains either the factor $e_N\cdot n_{N,*}$ or
$\nabla_{\Gamma_{N,*}}e_N\cdot n_{N,*}$.  Therefore, integration by
parts in this case gives a finer bound
\begin{align}\label{eq:B-est1}
	| \mathscr B_{\Gamma_{N,*}}(e_N, e_N) | 
	&\leq C \| e_N \|_{L^2(\Sph)}^2
	+ C \| \nabla_{\Sph} e_N \cdot n_{N,*}\circ X_{N,*} \|_{L^2(\Sph)}
	\| e_N \|_{L^2(\Sph)}
	\notag\\
	&\leq C \| e_N \|_{L^2(\Sph)}^2
	+ C \| \nabla_{\Sph} e_N \cdot n \circ X \|_{L^2(\Sph)}
	\| e_N \|_{L^2(\Sph)}
	\notag\\
	&\quad
	+
	C e^{-cN}
	\|\nabla_{\Sph} e_N \|_{L^2(\Sph)} \| e_N \|_{L^2(\Sph)}
	,
\end{align}
where we have used
\begin{align}\label{eq:H1-norm-convert}
	&\bigg|
	\| \nabla_{\Sph} e_N \cdot n_{N,*}\circ X_{N,*} \|_{L^2(\Sph)}
	-
	\|\nabla_{\Sph} e_N \cdot n \circ X \|_{L^2(\Sph)}
	\bigg|
	\leq C 
	e^{-cN}
	\|\nabla_{\Sph} e_N \|_{L^2(\Sph)}.
\end{align}
Using \eqref{eq:H1-norm-convert} and the norm equivalence (Lemma \ref{lemma:norm-equiv}), we obtain
\begin{align}\label{eq:A-nor-equiv1}
	\| \nabla_{\Sph} e_N \cdot n \circ X \|_{L^2(\Sph)}^2 
	&\leq
	\| \nabla_{\Sph} e_N \cdot n_{N,*}\circ X_{N,*} \|_{L^2(\Sph)}^2
	+
	C
	e^{-cN}
	\|\nabla_{\Sph} e_N \|_{L^2(\Sph)}^2
	\notag\\
	&
	\leq
	C\mathscr A_{\Gamma_{N,*}}^{\rm Nor} (e_N,e_N)
	+
	C
	e^{-cN}
	\|\nabla_{\Sph} e_N \|_{L^2(\Sph)}^2
	.
\end{align}
Thus, the positive definite term
$\mathscr A_{\Gamma_{N,*}}^{\rm Nor} (e_N,e_N)$ can be replaced by
$\| \nabla_{\Sph} e_N \cdot n \circ X \|_{L^2(\Sph)}^2$, up to a harmless
higher-order error.

\section{Convergence proof}\label{sec:conv-proof}

Subtracting \eqref{eq:cons-eq} from \eqref{eq:pdf-semi-scheme}, we obtain the error equation:
\begin{align}\label{eq:error-eq}
	&\int_{\Gamma_{N,*}}\partial_te_N\cdot \phi_N
	+
	\mathscr A_{\Gamma_{N,*}}^{\rm Nor}(e_N,\phi_N) 
	\notag\\
	&\qquad
	=
	-\mathscr B_{\Gamma_{N,*}}(e_N,\phi_N)
	-\mathscr L(\phi_N)-\mathscr K(\phi_N)-\mathscr J(\phi_N)-\mathscr D(\phi_N).
\end{align}
We first test \eqref{eq:error-eq} with $\phi_N=\partial_t e_N\in[\V_N]^3$ to obtain the velocity error estimate
\begin{align}
	&\int_{\Gamma_{N,*}}\partial_te_N\cdot \partial_t e_N
	=
	-\mathscr A_{\Gamma_{N,*}}^{\rm Nor}(e_N,\partial_t e_N) 
	-\mathscr B_{\Gamma_{N,*}}(e_N,\partial_t e_N)
	\notag\\
	&\qquad
	-\mathscr L(\partial_t e_N)-\mathscr K(\partial_t e_N)-\mathscr J(\partial_t e_N)-\mathscr D(\partial_t e_N). \notag
\end{align}
Using H\"older's inequality, norm equivalence, the consistency estimate
\eqref{eq:D-est} from Section \ref{sec:cons}, and the stability estimates
\eqref{eq:J-est}--\eqref{eq:K-est} from Section \ref{sec:stab}
for the linear and bilinear forms on the right-hand side, we arrive at
\begin{align}
	\norm{\partial_t e_N}_{L^2(\Sph)}^2
	&\leq
	C e^{-cN}
	\norm{\partial_te_N}_{L^\infty(\Sph)} \norm{\partial_t e_N}_{L^\infty(\Sph)}
	\notag\\
	&\quad+
	C 
	\big(e^{-c N} + \norm{\gradS e_N}_{L^\infty(\Sph)} + \norm{\partial_t e_N}_{L^\infty(\Sph)}\big)
	\norm{\gradS e_N}_{L^\infty(\Sph)}
	\norm{\partial_t e_N}_{L^\infty(\Sph)}
	\notag\\
	&\quad
	+ 
	C\big(e^{-c N} + \norm{\gradS e_N}_{L^\infty(\Sph)}\big)
	\norm{\gradS e_N}_{L^{\infty}(\Sph)}
	\norm{ \partial_t e_N}_{W^{1,\infty}(\Sph)}
	\notag\\
	&\quad
	+
	 C\norm{ e_N}_{H^1(\Sph)}
	 \norm{ \partial_t e_N}_{H^{1}(\Sph)}. \notag
\end{align}
From the induction hypothesis \eqref{eq:ind-hypo}, the inverse inequality, and
absorption, we obtain
\begin{align}\label{eq:vel-est}
	\norm{\partial_t e_N}_{L^2(\Sph)}
	\leq
	C e^{-c N} 
	+ 
	C N \norm{ e_N}_{H^{1}(\Sph)} ,
\end{align}
for sufficiently large $N$.

Next, taking $\phi_N=e_N\in[\V_N]^3$ in the error equation \eqref{eq:error-eq} gives
\begin{align}
  &\int_{\Gamma_{N,*}}\partial_te_N\cdot e_N
    +\mathscr A_{\Gamma_{N,*}}^{\rm Nor}(e_N,e_N) \notag\\
  &\qquad
  =-\mathscr B_{\Gamma_{N,*}}(e_N,e_N)-\mathscr L(e_N)-\mathscr K(e_N)-\mathscr J(e_N)-\mathscr D(e_N). \notag
\end{align}
Note that we have the identity
\begin{align}\label{eq:ev}
	\int_{\Gamma_{N,*}(t)}\partial_t e_N(t)\cdot e_N(t)
	=
	\frac{1}{2}\frac{\dd}{\dd t}
	\|e_N(t)\|_{L^2(\Gamma_{N,*}(t))}^2
	-
	\frac{1}{2}\int_{\Sph}|e_N(t)|^2\,\partial_tJ_{N,*}(t),
\end{align}
with
\begin{align}\label{eq:ev1}
	\left|
	\frac{1}{2}\int_{\Sph}|e_N(t)|^2\,\partial_tJ_{N,*}(t)
	\right|
	\leq
	C(1 + e^{-cN})\|e_N(t)\|_{L^2(\Sph)}^2
	\leq
	C\|e_N(t)\|_{L^2(\Sph)}^2.
\end{align}
Consequently, using \eqref{eq:A-nor-equiv1}, \eqref{eq:ev}--\eqref{eq:ev1} and estimates for the linear and bilinear forms, we derive
\begin{align}\label{eq:pdf-final-error-eq1}
	&\frac{\d}{\d t}\norm{e_N}_{L^2(\Gamma_{N,*}(t))}^2
	+
	\norm{\nabla_{\Sph} e_N\cdot n\circ X}_{L^2(\Sph)}^2
	 \notag\\
	&\qquad
		\leq 
		C\|e_N\|_{L^2(\Sph)}^2
		+
		C e^{-c N}
	\norm{ e_N}_{L^{2}(\Sph)}
	\notag\\
	&\qquad+
	C e^{-cN}
	\norm{\partial_te_N}_{L^\infty(\Sph)} \norm{e_N}_{L^\infty(\Sph)}
	\notag\\
	&\qquad
	+
	C 
	\big(e^{-c N} + \norm{\gradS e_N}_{L^\infty(\Sph)} + \norm{\partial_t e_N}_{L^\infty(\Sph)}\big)
	\norm{\gradS e_N}_{L^\infty(\Sph)}
	\norm{ e_N}_{L^\infty(\Sph)}
	\notag\\
	&\qquad
	+
	C\norm{e_N}_{L^2(\Sph)} \norm{\nabla_{\Sph} e_N\cdot n\circ X}_{L^2(\Sph)}
	\notag\\
	&\qquad
	+
		C( e^{-c N} + \norm{\nabla_{\Sph} e_N}_{L^{\infty}(\Sph)})
		 \norm{ e_N}_{W^{1,\infty}(\Sph)}^2.
	\end{align}
Here the critical term $\norm{\nabla_{\Sph} e_N\cdot n\circ X}_{L^2(\Sph)}$ comes from \eqref{eq:pdf-J2-geometric-difference} and \eqref{eq:B-est1}.
Using H\"older's inequality,
the velocity estimate \eqref{eq:vel-est}, and the induction hypothesis
\eqref{eq:ind-hypo}, all terms on the right-hand side can either be absorbed or
controlled by the left-hand side.
For instance, the last term on the right-hand side of \eqref{eq:pdf-final-error-eq1} can be bounded as follows:
\begin{align}
	C( e^{-c N} + \norm{\nabla_{\Sph} e_N}_{L^{\infty}(\Sph)})
	\norm{ e_N}_{W^{1,\infty}(\Sph)}^2
	\leq
	C( e^{-c N} + N^{-4})
	N^{4}
	\norm{ e_N}_{L^{2}(\Sph)}^2
	\leq
	C
	\norm{ e_N}_{L^{2}(\Sph)}^2 , \notag
\end{align}
for some suitable generic constants $C$ and $c$. The other terms on the right-hand side of \eqref{eq:pdf-final-error-eq1} can be handled similarly.
Upon absorption, we get
\begin{align}\label{eq:pdf-final-error-eq2}
	&\frac{\d}{\d t}\norm{e_N}_{L^2(\Gamma_{N,*}(t))}^2
	+
	\norm{\nabla_{\Sph} e_N\cdot n\circ X}_{L^2(\Sph)}^2
	\leq C e^{-c N} +
	C \norm{ e_N}_{L^{2}(\Sph)}^2
	.
\end{align}
With $T'$ defined in \eqref{eq:ind-hypo}, we integrate \eqref{eq:pdf-final-error-eq2} over $[0,T']$, use norm equivalence between $\Sph$ and $\Gamma_{N,*}(t)$, and apply Gronwall's inequality to obtain
	\begin{equation*}
	  \norm{e_N}_{L_t^\infty L_x^2([0,T']\times\Sph)}
	  +
	  \norm{\nabla_{\Sph} e_N\cdot n\circ X}_{L_t^2L_x^2([0,T']\times\Sph)}
	  \leq C e^{-c N}.
	\end{equation*}
The constants here depend on $X$ and $T'$, but not on
$N$.
By choosing sufficiently large $N$ and using the inverse inequality, we can achieve
\begin{align}
	\norm{e_N}_{L_t^\infty W_x^{1,\infty}([0,T']\times \Sph)}
	\leq 
	\frac{1}{2}N^{-4} < N^{-4}. \notag
\end{align}
Since the error equation \eqref{eq:error-eq} is a finite-dimensional ODE system, there exists $\Delta T>0$ such that $\norm{e_N}_{L_t^\infty W_x^{1,\infty}([0,T'+\Delta T]\times \Sph)}
\leq N^{-4}$. According to the definition of $T'$, we see that $T'=T$. After using the triangle inequality, the identity $X_N-X=e_N-(1-\Lhyp_N)X$ and the approximation property of $\Lhyp_N$, the proof of Theorem \ref{thm:main} is now complete.

%

\section{Numerical experiments}

We test the semi-discrete Dziuk-type pseudo-spectral method for mean curvature flow \eqref{eq:pdf-semi-scheme} on a smooth dumbbell surface. The initial surface is prescribed as a radial graph over the unit sphere, \[ X_0(\theta,\varphi) = \rho(\theta,\varphi)\,\hat x(\theta,\varphi), \qquad \rho(\theta,\varphi) = 1+Y_{20}(\theta,\varphi), \] where \(Y_{20}\) denotes the spherical harmonic of degree \(\ell=2\) and order \(m=0\), and \(\hat x\) is the outward unit normal vector on the unit sphere. Following \cite{GS2002,VRBZ11,BV2026}, we employ the product quadrature \(\Q_R\), consisting of the \((2R+1)\)-point trapezoidal rule in the azimuthal direction and the \((\lfloor R/2\rfloor+1)\)-point Gauss quadrature in the polar direction. For this specific choice of \(\Q_R\), the associated hyperinterpolation operator \(\Lhyp_N\) can be computed by a fast Fourier--Legendre transform \cite{Mohlenkamp1999} with complexity $\mathcal O\bigl(N^2(\log N)^2+N^2\log N\bigr)$, provided that \(R=\mathcal O(N)\). At each time step, we solve the following linearly implicit Dziuk-type scheme with a sufficiently small time step size \(\tau\): 
For $m=0,1,...,M-1$ with $M:=\lfloor \frac{T}{\tau} \rfloor$,
find \(X_N^{m+1}\in[\V_N]^3\) such that \[ \left( \frac{X_N^{m+1}-X_N^m}{\tau},\phi_N \right)_{\Gamma_N^m,\Q_R} + \left( \nabla_{\Gamma_N^m}X_N^{m+1}, \nabla_{\Gamma_N^m}\phi_N \right)_{\Gamma_N^m,\Q_R} =0 \] for all test functions \(\phi_N\in[\V_N]^3\). Here all metric quantities and quadrature weights are evaluated on the previous surface \(\Gamma_N^m\). In the experiments, we set the quadrature order to \(R=3N\) and the final time to \(T=0.1\). The spherical harmonic degree is chosen as $N=4,6,8,10,12,14.$ Since no exact solution is available for this dumbbell evolution, the errors are measured against an over-resolved reference solution with \(N_{\rm ref}=18\).

For \(*\in\{L^2,H^1,L^\infty\}\), we define the relative errors for the position and velocity at the final time $T$ by
\[
E_X^*(N)
=
\frac{
	\|X_N(T)-X_{N_{\rm ref}}(T)\|_*
}{
	\|X_{N_{\rm ref}}(T)\|_*
},
\qquad
E_V^*(N)
=
\frac{
	\|V_N(T)-V_{N_{\rm ref}}(T)\|_*
}{
	\|V_{N_{\rm ref}}(T)\|_*
},
\]
where $V_N:=\partial_t X_N$.
The resulting errors are depicted in
Figure~\ref{fig:dumbbell-mcf-semilogy}. We observe that both the position and velocity errors exhibit spectral convergence, which is consistent with the spectral convergence predicted by the main theorem (Theorem \ref{thm:main}).
\begin{figure}[t]
	\centering
	\begin{minipage}[t]{0.48\textwidth}
		\centering
		\includegraphics[width=\linewidth]{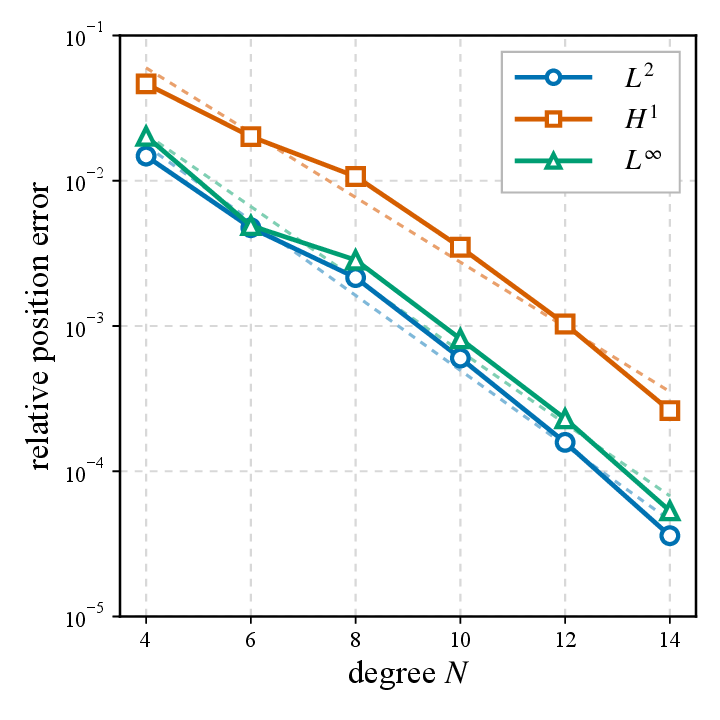}
		\centerline{(a) Position errors}
	\end{minipage}
	\hfill
	\begin{minipage}[t]{0.48\textwidth}
		\centering
		\includegraphics[width=\linewidth]{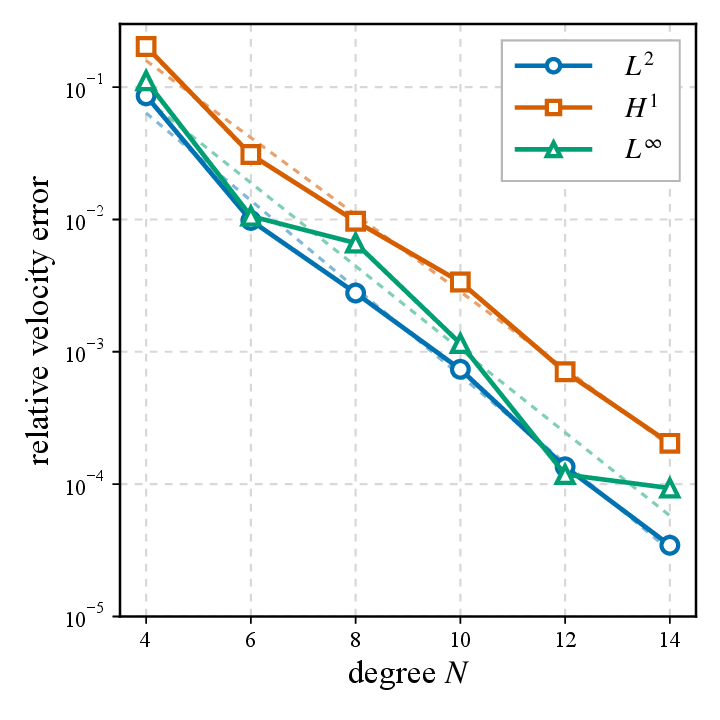}
		\centerline{(b) Velocity errors}
	\end{minipage}
	\caption{
		Spatial convergence for the dumbbell experiment at \(T=0.1\) with
		\(\tau=10^{-4}\).  The reference solution uses \(N_{\rm ref}=18\).
		Left: relative position errors in the \(L^2\), \(H^1\), and \(L^\infty\) norms.
		Right: relative velocity errors in the \(L^2\), \(H^1\), and \(L^\infty\) norms.
		The nearly linear decay on the semilogarithmic scale is consistent with
		spectral convergence in the spherical harmonic degree \(N\).
	}
	\label{fig:dumbbell-mcf-semilogy}
\end{figure}

\renewcommand{\refname}{\bf References}
\bibliographystyle{abbrv}
\bibliography{spec_MCF}

@article{ZB2025,
	title={Spherical harmonics-based pseudo-spectral method for quantitative analysis of symmetry breaking in wrinkling of shells with soft cores},
	author={Zavodnik, Jan and Brojan, Miha},
	journal={Comput. Methods Appl. Mech. Eng.},
	volume={433},
	pages={117529},
	year={2025},
	publisher={Elsevier}
}

@article{FBS2023,
	title={A spectral boundary integral method for simulating electrohydrodynamic flows in viscous drops},
	author={Firouznia, Mohammadhossein and Bryngelson, Spencer H and Saintillan, David},
	journal={J. Comput. Phys.},
	volume={489},
	pages={112248},
	year={2023},
	publisher={Elsevier}
}

@article{SW2000,
	title={Constructive polynomial approximation on the sphere},
	author={Sloan, Ian H. and Womersley, Robert S.},
	journal={J. Approx. Theory},
	volume={103},
	number={1},
	pages={91--118},
	year={2000},
	publisher={Elsevier}
}

@article{GLS2011,
	title={A pseudospectral quadrature method for {Navier--Stokes} equations on rotating spheres},
	author={Ganesh, Mahadevan and Le Gia, Quoc T. and Sloan, Ian H.},
	journal={Math. Comp.},
	volume={80},
	number={275},
	pages={1397--1430},
	year={2011}
}

@article{GS2002,
	title={Fully discrete spectral boundary integral methods for {Helmholtz} problems on smooth closed surfaces in {$\mathbb{R}^{3}$}},
	author={Graham, Ivan G. and Sloan, Ian H.},
	journal={Numer. Math.},
	volume={92},
	number={2},
	pages={289--323},
	year={2002},
	publisher={Springer}
}

@article{Sloan1995,
	title={Polynomial interpolation and hyperinterpolation over general regions},
	author={Sloan, Ian H},
	journal={J. Approx. Theory},
	volume={83},
	number={2},
	pages={238--254},
	year={1995},
	publisher={Elsevier}
}

@book{Trefethen2000book,
	title={Spectral Methods in {MATLAB}},
	author={Trefethen, Lloyd N.},
	year={2000},
	publisher={SIAM}
}

@book{AH2012book,
	title={Spherical Harmonics and Approximations on the Unit Sphere: An Introduction},
	author={Atkinson, Kendall E. and Han, Weimin},
	volume={2044},
	year={2012},
	publisher={Springer Science \& Business Media}
}

@incollection{Xu2014,
	title={Best polynomial approximation on the unit sphere and the unit ball},
	author={Xu, Yuan},
	booktitle={Approximation Theory XIV: San Antonio 2013},
	pages={357--375},
	year={2014},
	publisher={Springer}
}

@book{CHQZ1988book,
	title = {Spectral Methods in Fluid Dynamics},
	ISBN = {9783642841088},
	url = {http://dx.doi.org/10.1007/978-3-642-84108-8},
	DOI = {10.1007/978-3-642-84108-8},
	publisher = {Springer Berlin Heidelberg},
	author = {Canuto,  Claudio and Hussaini,  M. Yousuff and Quarteroni,  Alfio and Zang,  Thomas A.},
	year = {1988}
}

@book{DX2013book,
	title={Approximation Theory and Harmonic Analysis on Spheres and Balls},
	author={Dai, Feng and Xu, Yuan},
	year={2013},
	publisher={Springer}
}

@book{KP2002book,
	title={A Primer of Real Analytic Functions},
	author={Krantz, Steven G and Parks, Harold R},
	year={2002},
	publisher={Springer Science \& Business Media}
}

@article{Seeley1969,
	title={Eigenfunction expansions of analytic functions},
	author={Seeley, R. T.},
	journal={Proc. Amer. Math. Soc.},
	volume={21},
	number={3},
	pages={734--738},
	year={1969},
	publisher={JSTOR}
}

@book{Roman2005book,
	title={Advanced Linear Algebra},
	author={Roman, Steven},
	year={2005},
	publisher={Springer}
}

@article{LN2005,
	title={A posteriori error analysis for the mean curvature flow of graphs},
	author={Lakkis, Omar and Nochetto, Ricardo H},
	journal={SIAM J. Numer. Anal.},
	volume={42},
	number={5},
	pages={1875--1898},
	year={2005},
	publisher={SIAM}
}

@phdthesis{Fritz_thesis,
	title={Finite Elemente Approximation der {R}icci-{K}rümmung und Simulation des {R}icci-{D}eTurck-{F}lusses},
	author={Fritz, Hans},
	school={Albert-Ludwigs-Universität Freiburg},
	year={2013}
}

@book{Eck12,
  title = {Regularity Theory for Mean Curvature Flow},
  author = {K. Ecker},
  series = {},
  publisher = {{Springer}},
  location = {},
  year = {2012}
}

@article{CY2007,
	title={Uniqueness and pseudolocality theorems of the mean curvature flow},
	author={Chen, Bing-Long and Yin, Le},
	journal={Comm. Anal. Geom.},
	volume = {15},
	number = {},
	pages = {435--490},
	year={2007}
}

@article{EH1989,
	title={Mean curvature evolution of entire graphs},
	author={Ecker, Klaus and Huisken, Gerhard},
	journal={Ann. of Math.},
	volume={130},
	number={3},
	pages={453--471},
	year={1989},
	publisher={JSTOR}
}

@article{EH1991,
	title={Interior estimates for hypersurfaces moving by mean curvature},
	author={Ecker, Klaus and Huisken, Gerhard},
	journal={Invent. Math.},
	volume={105},
	number={1},
	pages={547--569},
	year={1991}
}

@article{Huisken1984,
	title={Flow by mean curvature of convex surfaces into spheres},
	author={Huisken, Gerhard},
	journal={J. Differential Geom.},
	volume={20},
	number={1},
	pages={237--266},
	year={1984},
	publisher={Lehigh University}
}

@article{ST2018,
	title={A highly accurate boundary integral equation method for surfactant-laden drops in {3D}},
	author={Sorgentone, Chiara and Tornberg, Anna-Karin},
	journal={J. Comput. Phys.},
	volume={360},
	pages={167--191},
	year={2018},
	publisher={Elsevier}
}

@article{Huisken1990,
	title={Asymptotic behavior for singularities of the mean curvature flow},
	author={Huisken, Gerhard},
	journal={J. Differential Geom.},
	volume={31},
	number={1},
	pages={285--299},
	year={1990},
	publisher={Lehigh University}
}

@article{Ecker2001,
	title={A local monotonicity formula for mean curvature flow},
	author={Ecker, Klaus},
	journal={Ann. of Math.},
	volume={154},
	number={2},
	pages={503--525},
	year={2001},
	publisher={JSTOR}
}

@article{White2005,
	title={A local regularity theorem for mean curvature flow},
	author={White, Brian},
	journal={Ann. of Math.},
	volume={161},
	number={3},
	pages={1487--1519},
	year={2005},
	publisher={JSTOR}
}

@article{DeTurck1983,
	title={Deforming metrics in the direction of their {R}icci tensors},
	author={DeTurck, Dennis M},
	journal={J. Differential Geom.},
	volume={18},
	number={1},
	pages={157--162},
	year={1983},
	publisher={Lehigh University}
}

@book{Man11,
  title = {{Lecture Notes on Mean Curvature Flow}},
  author = {Carlo Mantegazza},
  series = {Applied {{Mathematical Sciences}}},
  publisher = {{Springer Basel AG}},
  location = {{Basel, Switzerland}},
  year = {2012}
}

@incollection{BGN2020,
	AUTHOR={Barrett, J.W. and Garcke, H. and N{\"u}rnberg, R.},
	TITLE={Parametric finite element approximations of curvature driven interface evolutions},
	booktitle={Handb. Numer. Anal.},
	volume={21},
	pages={275--423},
	year={2020},
	publisher={Elsevier}
}

@unpublished{BV2026,
	title={A structure-preserving fast spectral method for locally inextensible vesicles with tangential smoothing},
	author={Bai, Genming and Veerapaneni, Shravan},
	note={To be submitted},
	year={2026}
}

@article{Mohlenkamp1999,
	title={A fast transform for spherical harmonics},
	author={Mohlenkamp, Martin J.},
	journal={J. Fourier Anal. Appl.},
	volume={5},
	number={2},
	pages={159--184},
	year={1999},
	publisher={Springer}
}

@article{KLL-Willmore,
  title = {{A convergent evolving finite element algorithm for Willmore flow of closed surfaces}},
  author = {Bal\'azs Kov\'acs and Buyang Li and Christian Lubich},
  volume = {149},
  number = {},
  pages = {595--643},
  journal = {Numer. Math.},
  year = {2021}
  }

@article{BL22A,
    author = {Genming Bai and Buyang Li},
    title = "{Erratum: Convergence of Dziuk's semidiscrete finite element method for mean curvature flow of closed surfaces with high-order finite elements}",
    journal = {SIAM J. Numer. Anal.},
    volume = {61},
    number = {3},
    pages = {1609--1612},
    year = {2023},
}

@article{BL2024,
	title={A new approach to the analysis of parametric finite element approximations to mean curvature flow},
	author={Bai, Genming and Li, Buyang},
	journal={Found. Comput. Math.},
	volume={24},
	number={5},
	pages={1673--1737},
	year={2024},
	publisher={Springer}
}

@article{BGV2026,
	author = {Genming Bai and Harald Garcke and Shravan Veerapeneni},
	title = {Convergence analysis for the {Barrett--Garcke--N{\"u}rnberg} method of transport type},
	journal = {Numer. Math.},
	volume = {158},
	year = {2026},
	pages = {361--410},
}

@article{BL2025,
	title={Convergence of a stabilized parametric finite element method of the {B}arrett--{G}arcke--{N}{\"u}rnberg type for curve shortening flow},
	author={Bai, Genming and Li, Buyang},
	journal={Math. Comp.},
	volume={94},
	number={355},
	pages={2151--2220},
	year={2025}
}

@article{VGZB09,
	title={A boundary integral method for simulating the dynamics of inextensible vesicles suspended in a viscous fluid in {2D}},
	author={Veerapaneni, Shravan K and Gueyffier, Denis and Zorin, Denis and Biros, George},
	journal={J. Comput. Phys.},
	volume={228},
	number={7},
	pages={2334--2353},
	year={2009},
	publisher={Elsevier}
}

@article{DFT2016,
	title={Reverse {H{\"o}lder} inequality for spherical harmonics},
	author={Dai, Feng and Feng, Han and Tikhonov, Sergey},
	journal={Proc. Amer. Math. Soc.},
	volume={144},
	number={3},
	pages={1041--1051},
	year={2016}
}

@article{DGT2020,
	title={Nikolskii constants for polynomials on the unit sphere},
	author={Dai, Feng and Gorbachev, Dmitry and Tikhonov, Sergey},
	journal={J. Anal. Math.},
	volume={140},
	number={1},
	pages={161--185},
	year={2020},
	publisher={Springer}
}

@article{LM2006,
	title={Polynomial operators and local approximation of solutions of pseudo-differential equations on the sphere},
	author={Le Gia, Quoc Thong and Mhaskar, Hrushikesh N},
	journal={Numer. Math.},
	volume={103},
	number={2},
	pages={299--322},
	year={2006},
	publisher={Springer}
}

@article{VGBZ09,
	title={A numerical method for simulating the dynamics of {3D} axisymmetric vesicles suspended in viscous flows},
	author={Veerapaneni, Shravan K and Gueyffier, Denis and Biros, George and Zorin, Denis},
	journal={J. Comput. Phys.},
	volume={228},
	number={19},
	pages={7233--7249},
	year={2009},
	publisher={Elsevier}
}

@article{VRBZ11,
	title={A fast algorithm for simulating vesicle flows in three dimensions},
	author={Veerapaneni, Shravan K and Rahimian, Abtin and Biros, George and Zorin, Denis},
	journal={J. Comput. Phys.},
	volume={230},
	number={14},
	pages={5610--5634},
	year={2011},
	publisher={Elsevier}
}

@Article{DD2006,
  Title                    = {{Error analysis of a finite element method for the Willmore flow of graphs}},
  Author                   = {Klaus Deckelnick and Gerhard Dziuk},
  volume={8},
  number={},
  pages={21--46},
  year={2006},
  journal={Interfaces Free Bound.},
}

@article{BMN2004,
	title={Surface diffusion of graphs: variational formulation, error analysis, and simulation},
	author={B{\"a}nsch, Eberhard and Morin, Pedro and Nochetto, Ricardo H},
	journal={SIAM J. Numer. Anal.},
	volume={42},
	number={2},
	pages={773--799},
	year={2004},
	publisher={SIAM}
}

@article{DD1995,
	title={Convergence of a finite element method for non-parametric mean curvature flow},
	author={Deckelnick, Klaus and Dziuk, Gerhard},
	journal={Numer. Math.},
	volume={72},
	number={2},
	pages={197--222},
	year={1995},
	publisher={Springer}
}

@article{DD2000,
	title={Error estimates for a semi-implicit fully discrete finite element scheme for the mean curvature flow of graphs},
	author={Deckelnick, Klaus and Dziuk, Gerhard},
	journal={Interfaces Free Bound.},
	volume={2},
	number={4},
	pages={341--359},
	year={2000}
}

@book{STW2011book,
	title={Spectral Methods: Algorithms, Analysis and Applications},
	author={Shen, Jie and Tang, Tao and Wang, Li-Lian},
	volume={41},
	year={2011},
	publisher={Springer Science \& Business Media}
}

@article {KLL19,
    author = {Bal\'azs Kov{\'a}cs and Buyang Li and Christian Lubich},
     title = {{A convergent evolving finite element algorithm for mean curvature flow of closed surfaces}},
   journal = {Numer. Math.},
    volume = {143},
      year = {2019},
    number = {},
     pages = {797--853},
 }

@article{BGN2007JCP,
  title = {A Parametric Finite Element Method for Fourth Order Geometric Evolution Equations},
  author = {John W. Barrett and Harald Garcke and Robert N\"urnberg},
  volume = {222},
  number = {},
  pages = {441--467},
  journal = {J. Comput. Phys.},
  year = {2007}
}

@article{BGN2008JCP,
  author = {John W. Barrett and Harald Garcke and Robert N\"urnberg},
  date = {2008-04},
  journaltitle = {J. Comput. Phys.},
  volume = {227},
  number = {},
  pages = {4281--4307},
  langid = {english},
  title = {On the parametric finite element approximation of evolving hypersurfaces in {$\mathbb{R}^3$}},
  journal = {J. Comput. Phys.},
  year = {2008}
}

@article{DDE2005,
  title = {Computation of Geometric Partial Differential Equations and Mean Curvature Flow},
  author = {Deckelnick, Klaus and Dziuk, Gerhard and Elliott, Charles M.},
  date = {2005-05},
  journaltitle = {Acta Numer.},
  volume = {14},
  pages = {139--232},
  publisher = {{Cambridge University Press}},
  langid = {english},
  journal = {Acta Numer.},
  year = {2005}
}

@article{DE2013,
  title = {Finite element methods for surface {PDEs}},
  author = {Gerhard Dziuk and Charles M. Elliott},
  date = {2013-05},
  journaltitle = {Acta Numer.},
  volume = {22},
  pages = {289--396},
  publisher = {{Cambridge University Press}},
  langid = {english},
  journal = {Acta Numer.},
  year = {2013}
}

@article{EF2017,
  title = {On approximations of the curve shortening flow and of the mean curvature flow based on the {DeTurck} trick},
  author = {Charles M. Elliott and Hans Fritz},
  date = {2017},
  journaltitle = {IMA J. Numer. Anal.},
  volume = {37},
  number = {2},
  pages = {543--603},
  langid = {english},
  journal = {IMA J. Numer. Anal.},
  year = {2017}
}

@article{Li21,
  title = {{Convergence of Dziuk's semidiscrete finite element method for mean curvature flow of closed surfaces with high-order finite elements}},
  author = {Li, Buyang},
  pages = {1592--1617},
  volume = {59},
  journal = {SIAM J. Numer. Anal.},
  year = {2021},
}

\end{document}